\documentclass{article}
\usepackage[a4paper, margin=2.5cm]{geometry}
\usepackage[leqno,intlimits]{amsmath}
\usepackage{amssymb}
\usepackage{amsthm}
\usepackage{hyperref}
\usepackage{tikz}
\numberwithin{equation}{section}

\newtheorem{tm}{Theorem}[section]
\newtheorem{cor}[tm]{Corollary}
\newtheorem{pr}[tm]{Proposition}
\newtheorem{rem}[tm]{Remark}
\newtheorem{lm}[tm]{Lemma}

\newcommand*{\Lk}{L^\text{bulk}}
\newcommand*{\Ly}{L^\text{bry}}
\newcommand*{\La}{L^\text{aux}}
\newcommand*{\vril}{\un\vr^{i\leftarrow}}
\newcommand*{\vrir}{\un\vr^{i\rightarrow}}
\newcommand*{\Zb}{\mathbb Z}
\newcommand*{\Kc}{\mathcal K}
\newcommand*{\om}{\omega}
\newcommand*{\un}[1]{\underline{#1}}
\newcommand*{\e}[1]{\text{\rm e}^{#1}}
\newcommand*{\be}{\beta}
\newcommand*{\vr}{\varrho}
\newcommand*{\te}{\theta}
\newcommand*{\Rb}{\mathbb R}
\DeclareMathOperator{\Eb}{{\mathbb E}}
\newcommand*{\ze}{\zeta}
\newcommand*{\vp}{\varphi}
\newcommand*{\di}{\,\text{d}}
\newcommand*{\hop}{\bigskip\noindent}

\begin{document}

\title{Semi-open boundary for random walking shocks}

\author{M\'arton Bal\'azs\footnotemark[1]\thanks{University of Bristol, UK. M.B.\ was partially supported by EPSRC's EP/R021449/1 and EP/W032112/1 Standard Grants.}}

\maketitle

\begin{abstract}
 Non-stationary time evolution of interacting particle systems is in general a rather difficult topic however, exceptional examples are known where hidden processes within the model make the description manageable. These hidden processes often take the form of a finite number of interacting random walks and in some cases, rather than being hidden, were very explicitly revealed as \emph{second class particles} associated with the model. The examples of asymmetric exclusion and exponential bricklayers model are known in this context, where such distributional structure was demonstrated in infinite volume. Here we find boundary mechanisms that save this remarkable structure in finite volume of the model with semi-open boundaries that let ordinary particles, but not the shocks, through. This finding also allows to characterise nontrivial two-species, non-reversible stationary distributions subject to our special boundary rates.
\end{abstract}

\section{Introduction}

Certain one-dimensional interacting particle systems are known to possess \emph{random walking shocks}, special distributions that are mixtures of product distributions with piecewise constant density values. While these are not stationary distributions of the system, their time evolution is described in a relatively simple way by considering the locations where the density jumps. These locations turn out to perform interacting simple random walks, and in some cases are known to be microscopically characterised by the presence of a \emph{second class particle}. On the unconstrained integer lattice \(\Zb\), the phenomenon is described in Bal\'azs, Farkas, Kov\'acs, R\'akos \cite{rwshscp}, and a further exhaustive search within the nearest-neighbour zero range family has been carried out in Bal\'azs, Duffy, Pantelli \cite{b_d_p_qzrp_rwsh_s}. \cite{rwshscp} will be the starting point for our work here.

The generic structure of these measures is product in space with one free density parameter on, say, the left of the first shock. The density values on the right of subsequent shocks and the jump rates of the random walking shocks are then fixed by algebraic relations. The three families of processes with such phenomenon we consider are
\begin{itemize}
 \item Asymmetric Simple Exclusion Process (ASEP), first treated by Belitsky and Sch\"utz \cite{qse}, the second class particle added in later by R\'akos and Sch\"utz \cite{Rakos2004} and Bal\'azs, Farkas, Kov\'acs, R\'akos \cite{rwshscp}. The shocks jump both to the left and to the right.
 \item Zero range-type processes, exhaustively investigated for random walking shocks by Bal\'azs, Duffy, Pantelli \cite{b_d_p_qzrp_rwsh_s}. Only the totally asymmetric case works here, hence the shock only jumps to the right.
 \item Bricklayers process, considered by a series of papers, Bal\'azs \cite{valak,sokvalak} and Bal\'azs, Farkas, Kov\'acs, R\'akos \cite{rwshscp}. This is essentially two zero range generators put together, one jumping left and one to the right. Therefore the shock jumps to both left and right.
\end{itemize}

In this note we identify semi-open boundary mechanisms with which the above random walking shocks can be confined to a finite volume of \(\Zb\). Our boundary is open for ordinary particles, only reflecting shocks and the second class particles sitting inside. Finding such setup that is compatible with the random walking shocks dynamics was mentioned as an open problem in \cite{bel_schutz_selfd_shockdyn_18}. As a consequence we also characterise the stationary distributions of the confined model under our boundary mechanisms.

The asymmetry in the jump dynamics makes the process non-reversible. To keep this problem interesting, we need the shocks to be able to jump both left and right, otherwise they would accumulate on one end of the volume. This excludes zero range processes from the above list of models with random walking shocks, leaving the discussion to ASEP and bricklayers processes only. Our arguments extend, and therefore heavily rely on, \cite{rwshscp} and our notation is compatible with that article. 

Surprisingly simple random walks can appear in the time-evolution of complicated systems. We already mentioned known examples of conservative interacting particle systems \cite{qse,valak,sokvalak,rwshscp,b_d_p_qzrp_rwsh_s}. R\'akos and Sch\"utz \cite{Rakos2004} osberve this phenomenon in a more general two-species exclusion model, and they can also fit open boundary rates to keep the random walk structure of shocks. Belitsky and Sch\"utz \cite{beli_schutz_selfd_2comp_15} consider ASEP with second class particles and reflecting boundary condition for all species. They characterise reversible stationary measures via stochastic self-duality of the process. The authors also consider more than two species in this context in \cite{bel_schutz_selfd_shockdyn_18}, while Sch\"utz and Tabatabaei \cite{schtaba} generalise this to ASEP with internal degrees of freedom. De Masi, Ferrari and Gabrielli \cite{masi_fer_gabr_hidden_temp_kmp} used a structure of hidden random variables to explore stationary distributions in the Kipnis-Marchioro-Presutti model, and Giardin\`a, Redig and van Tol \cite{giar_redig_vtol_intert_prop_mixt} generalised this to further models. These hidden random variables appear to play a similar role to the random walkers in the previous works.

Sch\"utz \cite{schutz_rev_duality} introduced the notion of reverse duality to investigate the behaviour of open boundary ASEP in terms of an inhomogeneous instance with reflecting boundaries. Given the right boundary parameters, shock product measures are shown to evolve as the particles of this dual process, with reflecting boundaries. Our ASEP case is rather similar to this work however, our use of the second class particle to trace the shock locations represents a difference. 

Duality and quantum algebra methods play an important role in many results of the literature. These considerations even led to the introduction of seemingly unnatural models which, however, possess very convenient algebraic properties. Of the vast literature we mention the work of Carinci, Giardin\`a and Redig \cite{car_gia_red_sas_gen_asep} as an example introducing the ASEP(\(q,\,j\)) model, the duality properties of which were extended to the multi-species case by Kuan \cite{kuan_multi_aqj_18}.

Our work adds to the state-of-the-art by making the boundary open for ordinary particles but reflective for the shock-walkers, at the same time identifying these with the second class particle. This way these often hidden random variables become very explicit in the microscopic configuration itself. We do this for ASEP which has been much investigated before, but also for bricklayers about which much less is known. In particular integrable, duality, or quantum algebra structures are, to the best of our knowledge, completely missing for this model. In fact we work with elementary generator manipulations, though our calculations become tedious at places.

\section{The models and results}

We only consider the ASEP and the \emph{totally asymmetric exponential bricklayers process} (abbreviated BLP). We now recall their definitions from \cite{rwshscp}. Let \(I=\{0,\,1\}\) for ASEP and \(I=\Zb\) for BLP, this represents the set of possible local particle numbers per site. In \cite{rwshscp} the state space was \(I^\Zb\) (modulo spatial growth restrictions for BLP, to make sure the process can be constructed, see Bal\'azs, Rassoul-Agha, Sepp\"al\"ainen and Sethuraman \cite{exists} and the recent paper by Andjel, Armend\'ariz, Jara \cite{and_arm_jara_zrp_rapid}). Here we fix an integer \(K>0\) and restrict our volume to the discrete interval \(\Kc:\,=\{1\dots K\}\), hence our state space will be \(I^\Kc\). This introduces the boundary edges \((0,\,1)\) and \((K,\,K+1)\) where we need to specify the boundary mechanism that was not present in the setting of \cite{rwshscp}, additional to the bulk dynamics which will be unchanged. Later on we also address half-line models where only one of the two boundaries \((0,\,1)\) or \((K,\,K+1)\) is present, the other side goes on indefinitely.

We start by repeating the definitions of the bulk jump rates from \cite{rwshscp}. With
\begin{equation}
 \om^{i,j}_k:\,=\left\{\begin{aligned}
  &\om_k,&&\text{for }k\ne i,\,j,\\
  &\om_k-1,&&\text{for }k=i,\\
  &\om_k+1,&&\text{for }k=j
 \end{aligned}\right.\label{eq:jump}
\end{equation}
for a configuration \(\un\om=(\om_k)_{k\in\Kc}\in I^\Kc\), the bulk dynamics of either ASEP or BLP consists of right and left jumps
\[
 \begin{aligned}
  \un\om&\to\un\om^{k,\,k+1}\qquad&\text{with rate}\qquad&p(\om_k,\,\om_{k+1}),\qquad\text{and}\\
  \un\om&\to\un\om^{k+1,\,k}\qquad&\text{with rate}\qquad&q(\om_k,\,\om_{k+1})
 \end{aligned}
\]
for \(1\le k<K\). The jump rates are of the particular forms
\begin{align}
 p(y,\,z)&=p\cdot{\bf1}\{y=1,\,z=0\}\qquad&\text{and}\qquad q(y,\,z)&=q\cdot{\bf1}\{y=0,\,z=1\}\qquad&&\text{(ASEP),}\notag\\
 p(y,\,z)&=\e{\be(y-\frac12)}+\e{\be(-z-\frac12)}\qquad&\text{and}\qquad q(y,\,z)&=0\qquad&&\text{(BLP), }y,\,z\in\Zb.\label{eq:blppdef}
\end{align}

For ASEP we assume \(p+q=1\). Our notation will be natural for the case \(p>q\) but all our results hold for \(p<q\) as well. We also repeat from \cite{rwshscp} that the i.i.d.\ Bernoulli(\(\vr\)) distribution for any \(0\le\vr\le1\) is stationary for ASEP if we replace \(\Kc\) by the infinite spatial volume \(\Zb\). Similarly, the product measure of site-marginals
\begin{equation}
 \mu^\te(z)=\frac{\e{\te^2\!/2\beta}}{Z(\te)}\cdot\e{-\frac\beta2\cdot\bigl(z-\frac\te\beta\bigr)^2}=\frac{\e{-\frac\be2z^2+\te z}}{Z(\te)}\qquad z\in I=\Zb\label{eq:gauss}
\end{equation}
for any \(\te\in\Rb\) is stationary for BLP on the infinite volume. Here \(Z(\te)\) is the normalisation constant which is related to the Jacobi theta function and has no closed form. We have the remarkable properties
\[
 Z(\te-\beta)=\e{\beta/2-\te}\cdot Z(\te)
\]
and with the expectation w.r.t.\ \(\mu^\te\),
\begin{equation}
 \vr(\te):\,=\Eb^{\mu^\te}(\om),\label{eq:rt}
\end{equation}
\(\rho(\te+\beta)=\rho(\te)+1\) holds. This density function \(\vr(\te)\) is shown in \cite{exists} to be a strictly monotone bijection from \(\Rb\) to \(\Rb\) for BLP. For its inverse, we therefore have
\begin{equation}
 \te(\vr+1)=\te(\vr)+\be.\label{eq:teshi}
\end{equation}
We also note that, with
\[
 f(y):\,=\e{\be(y-\frac12)}
\]
we have
\begin{equation}
 f(y)\cdot f(1-y)=1\qquad\forall y\in\Zb,\label{eq:fprod}
\end{equation}
\begin{equation}
 f(y+m)=\e{m\be}f(y)\qquad\forall y,\,m\in\Zb,\label{eq:bef}
\end{equation}
\begin{equation}
 p(y,\,z)=f(y)+f(-z),\label{eq:pf}
\end{equation}
and due to \(\be\bigl(\pm y-\frac12\bigr)-\frac\be2\cdot\bigl(y-\frac\te\be\bigr)^2=\pm\te-\frac\be2\cdot\bigl(y\mp1-\frac\te\be\bigr)^2\),
\begin{equation}
 f(\pm y)\cdot\mu^\te(y)=\e{\pm\te}\mu^\te(y\mp1),\qquad
 \Eb^{\mu^\te}f(\pm\om)=\sum_yf(\pm y)\cdot\mu^\te(y)=\e{\pm\te}\sum_y\mu^\te(y\mp1)=\e{\pm\te}.\label{eq:ef}
\end{equation}

With a slight abuse of notation, from here on we commonly denote by \(\mu^\vr\) the above respective distributions on \(I\) with mean \(\vr\in[0,\,1]\) for ASEP and \(\vr\in\Rb\) for BLP. Here we slightly deviate from the notation of \cite{rwshscp} as for the two models we cover it is sometimes simpler to use densities than the parameter \(\te\).

Next we turn to the joint evolution of a coupled pair of processes \((\un\om,\,\un\ze)\). We first describe the coupled dynamics in the bulk of \(\Kc\); the primary aim of this paper will be to fix this later for the boundaries in a way that preserves the nice distributional structures from \cite{rwshscp}. For both ASEP and BLP, we easily check
\[
 \begin{aligned}
  p(z+1,\,y)&\ge p(z,\,y),\qquad&p(y,\,z+1)&\le p(y,\,z)\\
  q(z+1,\,y)&\le q(z,\,y),\qquad&q(y,\,z+1)&\ge q(y,\,z)
 \end{aligned}
\]
which allows the \emph{basic coupling} to do its magic and preserve the coordinatewise inequality \(\om_i(t)\le\ze_i(t)\), \(\forall i\in\Kc\) if this was valid initially. The bulk basic coupling generator is cited from \cite{rwshscp}; for a function \(\vp\,:\,I^\Kc\to\Rb\) and states \(\om_i\le\ze_i\) (\(1\le i\le K\)),
\begin{equation}
 \begin{aligned}
  (\Lk\vp)(\un\om,\,\un\ze)=\sum_{k=1}^{K-1}\Bigl\{&p(\om_k,\,\ze_{k+1})\cdot\bigl[\vp(\un\om^{k,k+1},\,\un\ze^{k,k+1})-\vp(\un\om,\,\un\ze)\bigr]\\
  +\bigl[&p(\om_k,\,\om_{k+1})-p(\om_k,\,\ze_{k+1})\bigr]\cdot\bigl[\vp(\un\om^{k,k+1},\,\un\ze)-\vp(\un\om,\,\un\ze)\bigr]\\
  +\bigl[&p(\ze_k,\,\ze_{k+1})-p(\om_k,\,\ze_{k+1})\bigr]\cdot\bigl[\vp(\un\om,\,\un\ze^{k,k+1})-\vp(\un\om,\,\un\ze)\bigr]\\
  +\phantom{\bigl[}&q(\ze_k,\,\om_{k+1})\cdot\bigl[\vp(\un\om^{k+1,k},\,\un\ze^{k+1,k})-\vp(\un\om,\,\un\ze)\bigr]\\
  +\bigl[&q(\ze_k,\,\ze_{k+1})-q(\ze_k,\,\om_{k+1})\bigr]\cdot\bigl[\vp(\un\om,\,\un\ze^{k+1,k})-\vp(\un\om,\,\un\ze)\bigr]\\
  +\bigl[&q(\om_k,\,\om_{k+1})-q(\ze_k,\,\om_{k+1})\bigr]\cdot\bigl[\vp(\un\om^{k+1,k},\,\un\ze)-\vp(\un\om,\,\un\ze)\bigr]\Bigr\}.
 \end{aligned}\label{eq:lbulkdef}
\end{equation}
The difference \(\ze_i-\om_i\) is the number of \emph{second class particles} at site \(i\), and its sum for all sites is easily checked to be a conserved quantity as far as this bulk dynamics is concerned.

To describe the boundary dynamics, we first define the jumps (to be used for \(i=1\) and \(i=K\) only)
\begin{equation}
 \om^{i\pm}_k:\,=\left\{\begin{aligned}
  &\om_k,&&\text{for }k\ne i,\\
  &\om_k\pm1,&&\text{for }k=i
 \end{aligned}\right.\label{eq:1sp}
\end{equation}
and the boundary rates \(p_0(\cdot,\,\cdot)\), \(q_0(\cdot,\,\cdot)\), \(p_K(\cdot,\,\cdot)\), \(q_K(\cdot,\,\cdot)\). As opposed to the unindexed \(p\) and \(q\) rates, the two variables these functions depend on will be the particle occupations \((\om_1,\,\ze_1)\) and \((\om_K,\,\ze_K)\) for the coupled pair \((\un\om,\,\un\ze)\) on the extremal sites of \(\Kc\).

We are looking for boundary dynamics that is open for first class particles but reflects second class particles in order to keep the number of the latter conserved. That leaves us with the natural choices for moves as
\begin{equation}
 \begin{aligned}
  (\un\om,\,\un\ze)&\to(\un\om^{1+},\,\un\ze^{1+})\qquad&\text{with rate}\qquad&p_0(\om_1,\,\ze_1),\\
  (\un\om,\,\un\ze)&\to(\un\om^{1-},\,\un\ze^{1-})\qquad&\text{with rate}\qquad&q_0(\om_1,\,\ze_1),\\
  (\un\om,\,\un\ze)&\to(\un\om^{K-},\,\un\ze^{K-})\qquad&\text{with rate}\qquad&p_K(\om_K,\,\ze_K),\qquad\text{and}\\
  (\un\om,\,\un\ze)&\to(\un\om^{K+},\,\un\ze^{K+})\qquad&\text{with rate}\qquad&q_K(\om_K,\,\ze_K).
 \end{aligned}\label{eq:bdyr}
\end{equation}
The boundary generator is built of these jumps:
\begin{equation}
 \begin{aligned}
  (\Ly\vp)(\un\om,\,\un\ze)&=p_0(\om_1,\,\ze_1)\bigl[\vp(\un\om^{1+},\,\un\ze^{1+})-\vp(\un\om,\,\un\ze)\bigr]\\
  &+q_0(\om_1,\,\ze_1)\bigl[\vp(\un\om^{1-},\,\un\ze^{1-})-\vp(\un\om,\,\un\ze)\bigr]\\
  &+p_K(\om_K,\,\ze_K)\bigl[\vp(\un\om^{K-},\,\un\ze^{K-})-\vp(\un\om,\,\un\ze)\bigr]\\
  &+q_K(\om_K,\,\ze_K)\bigl[\vp(\un\om^{K+},\,\un\ze^{K+})-\vp(\un\om,\,\un\ze)\bigr]
 \end{aligned}\label{eq:lbdydef}
\end{equation}
and the object of our study is the Markov process on \(I^\Kc\) defined by the generator
\begin{equation}
 L:\,=\Lk+\Ly.\label{eq:ldef}
\end{equation}

We notice that the boundary rates might depend on both the \(\om\) and \(\ze\) values, which can stop either of the marginals \(\un\om(t)\) or \(\un\ze(t)\) being a Markov chain on its own. In particular, to keep the state space, the ASEP case only allows \((0,\,0)\) inside the \(p_0\) and \(q_K\) rates, and \((1,\,1)\) in the \(q_0\) and \(p_K\) rates for these to be non-zero.

\subsection{Confined random walking shocks}

Next we introduce the notation for the measures that are needed to state the main result. We parametrise these with the density \(\vr\) which is the only difference between our treatment here and that of \cite{rwshscp}. For a site \(i\in\Kc\), the marginal distribution of the pair \((\om_i,\,\ze_i)\) will be denoted by
\[
 \nu^{\vr_i,m_i}(y,\,z)=\left\{
  \begin{aligned}
   &\hat\mu^{\vr_i,m_i}(y),&&\text{if }z=y+m_i,\\
   &0,&&\text{if }z\ne y+m_i.
  \end{aligned}
 \right.
\]
Notice that this definition includes \(\hat\mu^{\cdot,1}(1)=0\) in the case of ASEP as this would require value 2 in the second variable of \(\nu^{\cdot,1}\). The density values \(\vr_i\) can be in \([0,\,1]\) for ASEP and in all of \(\Rb\) for BLP, while the integers \(m_i\ge0\) describe the number of second class particles at site \(i\). This latter is not part of the distribution, instead, both \(\vr_i\) and \(m_i\) are given parameters of the distribution. The measures \(\hat\mu^{\vr_i,m_i}\) describe the distribution of the smaller marginal \(\om_i\) when \(m_i\) second class particles are present at site \(i\) and will be specified further below. The distribution of interest for the coupled pair \((\un\om,\,\un\ze)\) will be the product of these marginals:
\begin{equation}
 \un\nu^{\un\vr,\un m}:\,=\bigotimes_{i\in\Kc}\nu^{\vr_i,m_i}\label{eq:nurm}
\end{equation}
with the parameter vectors \(\un\vr=(\vr_i)_{i\in\Kc}\) and \(\un m=(m_i)_{i\in\Kc}\).

The result of our work is that Theorem 2 (and hence Theorem 1 as well) of \cite{rwshscp} can be repeated for the finite volume \(\Kc\) with properly chosen boundary rates \eqref{eq:bdyr} that reflect the random walking shocks and second class particles. We extend the notation \eqref{eq:jump} to \(\un m\) as
\[
 m^{i,j}_k:\,=\left\{\begin{aligned}
  &m_k,&&\text{for }k\ne i,\,j,\\
  &m_k-1,&&\text{for }k=i,\\
  &m_k+1,&&\text{for }k=j,
 \end{aligned}\right.
\]
while \(\vrir\) and \(\vril\) will be defined below. Following standard notation, \(\un\nu^{\un\vr,\un m}S(t)\) denotes the distribution of the coupled pair \((\un\om(t),\,\un\ze(t))\) when started from distribution \(\un\nu^{\un\vr,\un m}\) and evolving according to generator \(L=\Lk+\Ly\).

\begin{figure}[ht]
 \begin{center}
  \begin{tikzpicture}
   \draw[->](-0.25,2.5)--(8.25,2.5)node[below right]{\small\(\Zb\)};
   \foreach\x in{0,0.5,...,8}
   \draw(\x,2.4)--(\x,2.6);
   \draw(3,2.4)node[below]{\tiny\(i\)};
   \draw(3.5,2.4)node[below]{\tiny\(i+1\)};
   \filldraw(0,2.8)circle[radius=0.1];
   \filldraw(1.5,2.8)circle[radius=0.1];
   \filldraw(2.5,2.8)circle[radius=0.1];
   \filldraw(4.5,2.8)circle[radius=0.1];
   \filldraw(5.5,2.8)circle[radius=0.1];
   \filldraw(7,2.8)circle[radius=0.1];

   \filldraw(0,3.1)circle[radius=0.1];
   \filldraw(1.5,3.1)circle[radius=0.1];
   \filldraw(2.5,3.1)circle[radius=0.1];
   \filldraw(3,3.1)circle[radius=0.1];
   \filldraw(4.5,3.1)circle[radius=0.1];
   \filldraw(5,3.1)circle[radius=0.1];
   \filldraw(5.5,3.1)circle[radius=0.1];
   \filldraw(7,3.1)circle[radius=0.1];

   \foreach\x in{0,0.5,...,2.5}
   \draw(\x-0.15,3.4)--(\x+0.15,3.4);
   \foreach\x in{3,3.5,...,4.5}
   \draw(\x-0.15,3.6)--(\x+0.15,3.6);
   \draw(3,3.6)node[above]{\tiny\(\vr_i\)};
   \draw(3.5,3.6)node[above]{\tiny\(\vr_{i+1}\)};
   \foreach\x in{5,5.5,...,8}
   \draw(\x-0.15,3.75)--(\x+0.15,3.75);

   \draw[->](-0.25,0.5)--(8.25,0.5)node[below right]{\small\(\Zb\)};
   \foreach\x in{0,0.5,...,8}
   \draw(\x,0.4)--(\x,0.6);
   \draw(3,0.4)node[below]{\tiny\(i\)};
   \draw(3.5,0.4)node[below]{\tiny\(i+1\)};
   \filldraw(0,0.8)circle[radius=0.1];
   \filldraw(1.5,0.8)circle[radius=0.1];
   \filldraw(2.5,0.8)circle[radius=0.1];
   \filldraw(4.5,0.8)circle[radius=0.1];
   \filldraw(5.5,0.8)circle[radius=0.1];
   \filldraw(7,0.8)circle[radius=0.1];

   \filldraw(0,1.1)circle[radius=0.1];
   \filldraw(1.5,1.1)circle[radius=0.1];
   \filldraw(2.5,1.1)circle[radius=0.1];
   \filldraw(3.5,1.1)circle[radius=0.1];
   \filldraw(4.5,1.1)circle[radius=0.1];
   \filldraw(5,1.1)circle[radius=0.1];
   \filldraw(5.5,1.1)circle[radius=0.1];
   \filldraw(7,1.1)circle[radius=0.1];

   \foreach\x in{0,0.5,...,3}
   \draw(\x-0.15,1.4)--(\x+0.15,1.4);
   \draw(3,1.4)node[above]{\tiny\(\vr_i\)};
   \foreach\x in{3.5,4,...,4.5}
   \draw(\x-0.15,1.6)--(\x+0.15,1.6);
   \draw(3.5,1.6)node[above]{\tiny\(\vr_{i+1}\)};
   \foreach\x in{5,5.5,...,8}
   \draw(\x-0.15,1.75)--(\x+0.15,1.75);
   \draw[densely dotted,->](3,1.1)--(3.35,1.1);
   \draw[densely dotted,->](3,1.9)--(3,1.7);
  \end{tikzpicture}
 \end{center}
 \caption{Right step of an ASEP second class particle with the change in density that follows}\label{fig:aseprst}
\end{figure}
\begin{figure}[ht]
 \begin{center}
  \begin{tikzpicture}
   \draw[->](-0.25,2.5)--(8.25,2.5)node[below right]{\small\(\Zb\)};
   \foreach\x in{0,0.5,...,8}
   \draw(\x,2.4)--(\x,2.6);
   \draw(2.5,2.4)node[below]{\tiny\(i\)};
   \draw(3,2.4)node[below]{\tiny\(i+1\)};
   \filldraw(0,2.8)circle[radius=0.1];
   \filldraw(1.5,2.8)circle[radius=0.1];
   \filldraw(2.5,2.8)circle[radius=0.1];
   \filldraw(4.5,2.8)circle[radius=0.1];
   \filldraw(5.5,2.8)circle[radius=0.1];
   \filldraw(7,2.8)circle[radius=0.1];

   \filldraw(0,3.1)circle[radius=0.1];
   \filldraw(1.5,3.1)circle[radius=0.1];
   \filldraw(2.5,3.1)circle[radius=0.1];
   \filldraw(3,3.1)circle[radius=0.1];
   \filldraw(4.5,3.1)circle[radius=0.1];
   \filldraw(5,3.1)circle[radius=0.1];
   \filldraw(5.5,3.1)circle[radius=0.1];
   \filldraw(7,3.1)circle[radius=0.1];

   \foreach\x in{0,0.5,...,2.5}
   \draw(\x-0.15,3.4)--(\x+0.15,3.4);
   \draw(2.5,3.4)node[above]{\tiny\(\vr_i\)};
   \foreach\x in{3,3.5,...,4.5}
   \draw(\x-0.15,3.6)--(\x+0.15,3.6);
   \draw(3,3.6)node[above]{\tiny\(\vr_{i+1}\)};
   \foreach\x in{5,5.5,...,8}
   \draw(\x-0.15,3.75)--(\x+0.15,3.75);

   \draw[->](-0.25,0.5)--(8.25,0.5)node[below right]{\small\(\Zb\)};
   \foreach\x in{0,0.5,...,8}
   \draw(\x,0.4)--(\x,0.6);
   \draw(2.5,0.4)node[below]{\tiny\(i\)};
   \draw(3,0.4)node[below]{\tiny\(i+1\)};
   \filldraw(0,0.8)circle[radius=0.1];
   \filldraw(1.5,0.8)circle[radius=0.1];
   \filldraw(3,0.8)circle[radius=0.1];
   \filldraw(4.5,0.8)circle[radius=0.1];
   \filldraw(5.5,0.8)circle[radius=0.1];
   \filldraw(7,0.8)circle[radius=0.1];

   \filldraw(0,1.1)circle[radius=0.1];
   \filldraw(1.5,1.1)circle[radius=0.1];
   \filldraw(2.5,1.1)circle[radius=0.1];
   \filldraw(3,1.1)circle[radius=0.1];
   \filldraw(4.5,1.1)circle[radius=0.1];
   \filldraw(5,1.1)circle[radius=0.1];
   \filldraw(5.5,1.1)circle[radius=0.1];
   \filldraw(7,1.1)circle[radius=0.1];

   \foreach\x in{0,0.5,...,2}
   \draw(\x-0.15,1.4)--(\x+0.15,1.4);
   \foreach\x in{2.5,3,...,4.5}
   \draw(\x-0.15,1.6)--(\x+0.15,1.6);
   \draw(2.5,1.6)node[above]{\tiny\(\vr_i\)};
   \draw(3,1.6)node[above]{\tiny\(\vr_{i+1}\)};
   \foreach\x in{5,5.5,...,8}
   \draw(\x-0.15,1.75)--(\x+0.15,1.75);
   \draw[densely dotted,->](2.5,0.8)--(2.85,0.8);
   \draw[densely dotted,->](2.5,1.35)--(2.5,1.55);
  \end{tikzpicture}
 \end{center}
 \caption{Left step of an ASEP second class particle with the change in density that follows}\label{fig:aseplst}
\end{figure}

\begin{tm}\label{tm:mush}
 The identity
 \begin{equation}
  \begin{aligned}
   \frac{\di}{\di t}\un\nu^{\un\vr,\un m}S(t)\Bigr|_{t=0}=\sum_{i=1}^{K-1}&P(m_i,\,m_{i+1},\,\vr_i,\,\vr_{i+1})\cdot\bigl[\un\nu^{\vrir,\un m^{i,i+1}}-\un\nu^{\un\vr,\un m}\bigr]\\
   +\,&Q(m_i,\,m_{i+1},\,\vr_i,\,\vr_{i+1})\cdot\bigl[\un\nu^{\vril,\un m^{i+1,i}}-\un\nu^{\un\vr,\un m}\bigr]
  \end{aligned}\label{eq:mush}
 \end{equation}
 holds in the following special cases with the following boundary rates \(p_0\), \(q_0\), \(p_K\), \(q_K\) and parameters \(\vrir\), \(\vril\), \(P(m_i,\,m_{i+1},\,\vr_i,\,\vr_{i+1})\) and \(Q(m_i,\,m_{i+1},\,\vr_i,\,\vr_{i+1})\):
 \begin{itemize}
  \item For the ASEP, \(m_i=0\) or \(1\),
  \begin{equation}
   \begin{aligned}
    p_0(0,\,0)&=(p+c)\vr_1,&\qquad q_0(1,\,1)&=(q+c)(1-\vr_1),\\
    p_K(1,\,1)&=(p+d)(1-\vr_K),&\qquad q_K(0,\,0)&=(q+d)\vr_K
   \end{aligned}\label{eq:asepbry}
  \end{equation}
 and zero in all other cases, with arbitrary constants \(c\) and \(d\) in \([-\min(p,\,q),\,\infty)\), the relation
  \begin{equation}
   \frac{\vr_i(1-\vr_{i-1})}{\vr_{i-1}(1-\vr_i)}=\left\{
    \begin{aligned}
     &1,&&\text{if }m_i=0,\\
     &\frac pq,&&\text{if }m_i=1
    \end{aligned}
   \right.\qquad(1<i\le K)\label{eq:asepsokfelt}
  \end{equation}
  holds between the densities and the asymmetry, and
  \begin{equation}
   \hat\mu^{\vr,m}(0)=1-\hat\mu^{\vr,m}(1)=\left\{
    \begin{aligned}
     &1-\vr,&&\text{if }m=0,\\
     &1,&&\text{if }m=1.
    \end{aligned}
   \right.\label{eq:asepsokmuhat}
  \end{equation}
  In this case
  \begin{align}
   \vr_j^{i\rightarrow}&=\left\{
    \begin{aligned}
     &\vr_j,&&\text{for }j\ne i,\\
     &\frac{q\vr_i}{p-(p-q)\vr_i},&&\text{for }j=i,
    \end{aligned}
   \right.\label{eq:asepvrj}\\
   \vr_j^{i\leftarrow}&=\left\{
    \begin{aligned}
     &\vr_j,&&\text{for }j\ne i,\\
     &\frac{p\vr_i}{q+(p-q)\vr_i},&&\text{for }j=i\text{, and}
    \end{aligned}
   \right.\notag\\
   P(m_i,\,m_{i+1},\,\vr_i,\,\vr_{i+1})&=m_i(1-m_{i+1})\cdot\bigl[(1-\vr_{i+1})p+\vr_{i+1}q\bigr],\label{eq:asepshrj}\\
   Q(m_i,\,m_{i+1},\,\vr_i,\,\vr_{i+1})&=(1-m_i)m_{i+1}\cdot\bigl[(1-\vr_i)q+\vr_ip\bigr].\label{eq:asepshrb}
  \end{align}
  \item For BLP, with arbitrary non-negative \(q^+_0(x,\,x+m)\) and \(q^+_K(x,\,x+m)\) (for \(x\in\Zb\), \(m\in\Zb_{\ge0}\))
  \[
   \begin{aligned}
    p^+_0(x,\,x+m):&=q^+_0(x+1,\,x+m+1)\cdot\e{\te_1}f(-x),\\
    p^+_K(x,\,x+m):&=q^+_K(x-1,\,x+m-1)\cdot\e{-\te_K}f(x),
   \end{aligned}
  \]
  the boundary rates are
  \begin{equation}
   \begin{aligned}
    p_0(\om_1,\,\om_1+m_1)&=\e{\te_0}+f(-\om_1)+p^+_0(\om_1,\,\om_1+m_1),\\
    p_K(\om_K,\,\om_K+m_K)&=f(\om_K+m_K)+\e{-\te_{K+1}}+p^+_K(\om_K,\,\om_K+m_K),\\
    q_0(\om_1,\,\om_1+m_1)&=q^+_0(\om_1,\,\om_1+m_1),\\
    q_K(\om_K,\,\om_K+m_K)&=q^+_K(\om_K,\,\om_K+m_K),
   \end{aligned}\label{eq:blpbry}
  \end{equation}
  where \(\te_0:\,=\te_1+\be m_1\) and \(\te_{K+1}:\,=\te_K\), the relations
  \begin{equation}
   \vr_{i-1}-\vr_i=m_i,\qquad\text{equivalently,}\qquad\te_{i-1}-\te_i=\be m_i \label{eq:blpsokfelt}
  \end{equation}
  hold between the parameters, and
  \begin{equation}
   \hat\mu^{\vr,m}(y)=\mu^\vr(y)\label{eq:blpsokmuhat}
  \end{equation}
  regardless of \(m\). In this case we have
  \begin{align}
   \vr_j^{i\rightarrow}&=\left\{
    \begin{aligned}
     &\vr_j,&&\text{for }j\ne i,\\
     &\vr_i+1,&&\text{for }j=i,
    \end{aligned}
   \right.\notag\\
   \vr_j^{i\leftarrow}&=\left\{
    \begin{aligned}
     &\vr_j,&&\text{for }j\ne i,\\
     &\vr_i-1,&&\text{for }j=i,
    \end{aligned}
   \right.\notag\\
   P(m_i,\,m_{i+1},\,\vr_i,\,\vr_{i+1})&=\e{\te(\vr_i+m_i)}-\e{\te(\vr_i)}\qquad\text{and}\label{eq:blpshrj}\\
   Q(m_i,\,m_{i+1},\,\vr_i,\,\vr_{i+1})&=\e{-\te(\vr_{i+1})}-\e{-\te(\vr_{i+1}+m_{i+1})}.\label{eq:blpshrb}
  \end{align}
 \end{itemize}
\end{tm}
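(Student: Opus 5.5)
The identity \eqref{eq:mush} is a statement about the adjoint generator acting on the product measure, so I will prove it in the dual form: for every test function \(\vp\) on the finite-volume coupled state space, I check \(\un\nu^{\un\vr,\un m}(L\vp)\) equals the right-hand side of \eqref{eq:mush} integrated against \(\vp\). Since \(\un\nu^{\un\vr,\un m}\) is product and has full support on a fibre of fixed \(\un m\), this is equivalent to the pointwise identity
\[
\sum_{(\un\om',\un\ze')}\un\nu^{\un\vr,\un m}(\un\om',\un\ze')\,\mathrm{rate}\bigl((\un\om',\un\ze')\to(\un\om,\un\ze)\bigr)-\un\nu^{\un\vr,\un m}(\un\om,\un\ze)\,\mathrm{out}(\un\om,\un\ze)=\text{RHS of \eqref{eq:mush} evaluated at }(\un\om,\un\ze).
\]
I divide both sides by \(\un\nu^{\un\vr,\un m}(\un\om,\un\ze)\) and split \(L=\Lk+\Ly\).

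For the bulk part I invoke the computation of \cite{rwshscp} (their Theorem 2) essentially verbatim. That proof is local: after dividing by the product measure, the bulk contribution is a sum over bonds \((k,k+1)\) of terms depending only on \((\om_k,\ze_k,\om_{k+1},\ze_{k+1})\). The shock terms \(P,Q\) are produced by the bonds carrying second class particles. The leftover terms telescope: each bond contributes a ``current'' difference \(J_k(\om_k,\ze_k)-J_{k+1}(\om_{k+1},\ze_{k+1})\) plus a constant. In infinite volume these telescope away. In \(\Kc\) they leave two uncancelled edge terms, one depending on \((\om_1,\ze_1)\) and one on \((\om_K,\ze_K)\). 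The constants also leave an \(\un\vr\)-dependent constant per boundary.

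For ASEP I expect the residue at site \(1\) to be of the form \(p\,\mathbf 1\{\om_1=1\}\cdot(\text{ratio})-q\,\mathbf 1\{\om_1=0\}\cdots\), with the ratio involving \(\vr_1/(1-\vr_1)\). For BLP I expect it to involve \(f(-\om_1)\), \(\e{\te_1}\) and \(\e{\te_0}\). The identity \eqref{eq:ef} will convert shifted-measure terms, and \eqref{eq:fprod}, \eqref{eq:bef} handle the second class offset \(m_1\). This bookkeeping of exactly what survives the telescoping at the two ends is the first concrete step.

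Next I compute the boundary generator's contribution. For a move \((\un\om,\un\ze)\to(\un\om^{1+},\un\ze^{1+})\), the incoming term divided by the measure is \(p_0(\om_1-1,\ze_1-1)\,\hat\mu^{\vr_1,m_1}(\om_1-1)/\hat\mu^{\vr_1,m_1}(\om_1)\). For BLP this equals \(p_0(\om_1-1,\ze_1-1)\e{-\te_1}f(\om_1)\) by \eqref{eq:ef}, and similarly for the other three moves. I then verify that with the rates \eqref{eq:asepbry} and \eqref{eq:blpbry}, these boundary contributions exactly cancel the edge residues from the bulk. The structure of the rates shows how this works.

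\textbf{BLP.} The arbitrary \(q^+_0\) part and its partner \(p^+_0\) form a pair that is reversible with respect to \(\mu^{\te_1}\) by construction: \(p^+_0(x,x+m)\mu(x)=q^+_0(x+1,x+m+1)\mu(x+1)\), using \eqref{eq:ef}. This pair therefore contributes zero, and it suffices to check the fixed part \(\e{\te_0}+f(-\om_1)\). The \(f(-\om_1)\) term mimics a bulk bond to a fictitious site \(0\). The \(\e{\te_0}\) term, with \(\te_0=\te_1+\be m_1\), plays the role of the incoming current from a site of density \(\vr_0=\vr_1+m_1\). This is consistent with \eqref{eq:blpsokfelt} and makes the shock rate \(P\) appear correctly, or cancel, on the boundary.

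\textbf{ASEP.} The free constant \(c\) similarly enters as a reversible pair: \(c\vr_1\) for insertion at \((0,0)\) and \(c(1-\vr_1)\) for removal at \((1,1)\). These balance under Bernoulli\((\vr_1)\) on the relevant event. The site \(K\) is the mirror image, and \(m_1=1\) (resp.\ \(m_K=1\)) is checked as a separate case.

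The main obstacle is the edge cases where a second class particle sits at site \(1\) or \(K\). There, the bulk residue contains a would-be shock jump across the boundary, which must vanish since \(P,Q\) in \eqref{eq:mush} run only over interior bonds. For these cases I will do the explicit case split \(m_1=0\) versus \(m_1\ge1\) (and \(\om_1\) values for ASEP). I will check that the choices \(\te_0=\te_1+\be m_1\) and \(\te_{K+1}=\te_K\) are precisely what make the would-be shock term cancel. The asymmetry between these two choices reflects that BLP is totally asymmetric to the right.
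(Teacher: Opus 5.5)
Your route is sound and genuinely different from the paper's. The paper never opens up the bulk computation. It adjoins fictitious sites $0$, $K+1$ with laws $\mu^{\vr_0}$, $\mu^{\vr_{K+1}}$ and no second class particles, and writes $\Lk=L^\Zb-\La$ on cylinder functions. It then imports the infinite-volume identity of \cite{rwshscp} as a black box (Lemma \ref{lm:lzcomp}), so it only has to compute the two fictitious bonds and $\Ly$. The price is that the black box contains spurious shock jumps $1\to0$ and $K\to K+1$ (the fibres $\un m^{1-}$, $\un m^{K-}$). These must be cancelled separately, in \eqref{eq:qleft} and \eqref{eq:pright}. The gain is that nothing internal to \cite{rwshscp} is needed, and the reverse engineering yields the whole family \eqref{eq:truth} of summable boundary rates, not just a check of the given ones.

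Your direct master equation in $\Kc$ never produces those spurious terms. In finite volume there is no ``would-be shock jump across the boundary''; the obstacle you single out is an artefact of the infinite-volume comparison, not of your method. Everything then rests on the bondwise telescoping, which you assert but do not derive. It does hold and is short. On the fibre $\un m$, after dividing by $\un\nu^{\un\vr,\un m}$, bond $(k,k+1)$ contributes:
\begin{itemize}
 \item for BLP, exactly $J_k-J_{k+1}$ with $J_k=f(-\om_k)-\e{\be m_k}f(\om_k)$, using \eqref{eq:ef}, \eqref{eq:fprod}, \eqref{eq:bef} and $\te_k-\te_{k+1}=\be m_{k+1}$;
 \item for ASEP, $J_k-J_{k+1}-(P_k+Q_k)$ with $J_k=-(p-q)(1-m_k)(\om_k-\vr_k)$.
\end{itemize}
The rates \eqref{eq:blpbry} then contribute $-(J_1-\Eb^{\mu^{\te_1}}J_1)$ and $J_K-\Eb^{\mu^{\te_K}}J_K$. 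By \eqref{eq:ef} and \eqref{eq:teshi}, $\Eb^{\mu^{\te_1}}J_1-\Eb^{\mu^{\te_K}}J_K=-\sum_{k=1}^{K-1}(P_k+Q_k)$. The choice $\te_0=\te_1+\be m_1$ is exactly what makes the in-flow term $\e{\te_0-\te_1}f(\om_1)$ cancel the $-\e{\be m_1}f(\om_1)$ in $J_1$. This works uniformly in $m_1$, so no case split is needed. The rates \eqref{eq:asepbry} contribute $-J_1$ and $+J_K$; both vanish when a second class particle sits at the boundary site. Completed this way, your argument is shorter and self-contained, but it only verifies the given rates.

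Several things must be repaired to make the proposal a proof.
\begin{itemize}
 \item Your pointwise identity must also be checked at configurations on the fibres $\un m^{i,i+1}$ and $\un m^{i+1,i}$. There $\un\nu^{\un\vr,\un m}$ vanishes, so dividing by it is meaningless. What is needed there is a separate two-site in-flow computation, which is where $P$, $Q$, $\vrir$, $\vril$ come from. It is unaffected by the boundary, since boundary moves preserve $\un m$.
 \item Citing \cite{rwshscp} ``verbatim'' does not give you the telescoping. That proof is in weak form and assumes no second class particles at the extreme sites of dependence, which is precisely your edge case. Either compute the bond terms directly as above, or deduce their separability from the infinite-volume identity on enlarged windows.
 \item Your ``$\un\vr$-dependent constant per boundary'' must be pinned down as above. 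Otherwise a leftover constant depending on interior parameters could not be absorbed by rates that see only sites $1$ and $K$.
 \item The left/right difference $\te_0=\te_1+\be m_1$ versus $\te_{K+1}=\te_K$ does not come from the total asymmetry of BLP. It comes from the convention that the $m_i$ second class particles at $i$ separate $\vr_{i-1}$ from $\vr_i$, and the same happens in ASEP.
\end{itemize}
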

See Figures \ref{fig:aseprst} and \ref{fig:aseplst} for illustration of the ASEP bulk dynamics. Notice that for \eqref{eq:asepvrj} indeed \(\vr_{i-1}=\frac{q\vr_i}{p-(p-q)\vr_i}\) whenever \(m_i=1\) and \(i>1\), but we haven't yet defined the value \(\vr_0\) when \(i=1\). Similarly, \(\vr_{i+1}=\frac{p\vr_i}{q+(p-q)\vr_i}\) whenever \(m_{i+1}=1\). The notation \(\vrir\) and \(\vril\) replace \(\un\vr^{i,i+1}\) \(\un\vr^{i+1,i}\) of \cite{rwshscp} for exactly the same reason. For BLP above we used \(\te(\vr)\), the inverse of the function \(\vr(\te)\) from \eqref{eq:rt}.

The interpretation with random walking shocks and second class particles is exactly as in \cite{rwshscp}, except that the walks take place in the finite volume \(\Kc\) and, as opposed to first class particles, cannot jump out of its boundaries. Briefly, second class particles see two different densities on their two sides, they perform simple random walks in their respective local shocks, interacting via their rank-dependent jump rates and, in ASEP, the exclusion rule as well. The only novelty from \cite{rwshscp} is that the construction survives the truncation to a finite volume if we impose the respective boundary rates \eqref{eq:asepbry} and \eqref{eq:blpbry}.

Notice that the ASEP boundaries can only move when there is no second class particle present at respective sites 1 and \(K\). The cases \(c=0\) or \(d=0\) correspond to Liggett's condition \cite{ligg_ergod_asep} that would make the Bernoulli product measures with the respective reservoir densities \(\vr_0=\vr_1\) and \(\vr_{K+1}=\vr_K\) stationary for a single-species boundaried ASEP. The parts with the constants \(c\) and \(d\) represent a reversible boundary mechanism with respect to the same Bernoulli(\(\vr_0\)) or Bernoulli(\(\vr_{K+1}\)) distribution, adding this will not harm the stationary structure.

The BLP boundary rates consist of two parts. The fixed parts in \(p_0\) and \(p_K\) represent the larger, between the coupled pair \((\un\om,\,\un\ze)\), of the two expected BLP boundary jump rates averaged out at the hypothetical sites \(0\) and \(K+1\). Notice that this would be the stationary boundary for a single-specie BLP system akin to Liggett's condition of ASEP. However, as in the ASEP case, we are allowed the addition of a reversible boundary dynamics with arbitrary non-negative \(q^+_0\) and \(q^+_K\), with the corresponding \(p^+_0\) and \(p^+_K\) counterparts that make this part of the dynamics reversible. Reversibility of these rates is readily checked by \eqref{eq:ef} which provides the detailed balance equations
\[
 \begin{aligned}
  p^+_0(\om_1,\,\om_1+m_1)\mu^{\te_1}(\om_1)&=q^+_0(\om_1+1,\,\om_1+m_1+1)\cdot\e{\te_1}f(-\om_1)\mu^{\te_1}(\om_1)\\
  &=q^+_0(\om_1+1,\,\om_1+m_1+1)\mu^{\te_1}(\om_1+1),\\
  p^+_K(\om_K,\,\om_K+m_K)\mu^{\te_K}(\om_K)&=q^+_K(\om_K-1,\,\om_K+m_K-1)\cdot\e{-\te_K}f(\om_K)\mu^{\te_K}(\om_K)\\
  &=q^+_K(\om_K-1,\,\om_K+m_K-1)\mu^{\te_K}(\om_K-1).
 \end{aligned}
\]

As we shall see, all our proofs treat the left and the right boundaries separately, and compare this finite volume case to the infinite volume result of \cite{rwshscp}. Hence all our arguments remain valid for half-sided models of the following form:
\begin{itemize}
 \item take the sum in \(\Lk\) of \eqref{eq:lbulkdef} from \(k=1\) to \(k=\infty\) (respectively, \(k=-\infty\) to \(k=K-1\)),
 \item only write the first (resepctively, last) two lines of \(\Ly\) in \eqref{eq:lbdydef}.
\end{itemize}
We call this the half-line model.
\begin{cor}
 Theorem \ref{tm:mush} remains valid for the half-line model with one of the boundaries of the sum in \eqref{eq:mush} replaced by \(\infty\) accordingly, and removing the respective boundary mechanisms where the model doesn't end in space.
\end{cor}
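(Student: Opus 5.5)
The plan is to repeat the generator computation behind Theorem \ref{tm:mush}, this time on the half-line, and to check that the left and right boundaries never interact in it. Take the right-infinite case: \(\Lk\) summed over \(k\ge1\) and only the \(p_0,\,q_0\) lines of \(\Ly\); the left-infinite case is symmetric.

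The identity \eqref{eq:mush} is tested against a cylinder function \(\vp\) depending on finitely many sites, say \(\{1,\dots,N\}\). Because of this and the finite range of the jumps, \(\un\nu^{\un\vr,\un m}(L\vp)\) involves only the edges \((k,\,k+1)\) with \(k\le N\), together with the left boundary term. So it suffices to prove
\[
 \sum_{\un\om,\un\ze}\un\nu^{\un\vr,\un m}(\un\om,\,\un\ze)\,(L\vp)(\un\om,\,\un\ze)=\text{right-hand side of \eqref{eq:mush} integrated against }\vp .
\]
On the right-hand side the infinite sum over \(i\) becomes effectively finite when tested on \(\vp\): for \(i>N\) the shock move \(\un\nu^{\vrir,\un m^{i,i+1}}\) changes only parameters at sites beyond the support of \(\vp\), so those terms cancel. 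One caveat: this needs finitely many second class particles, or at least some summability. For ASEP this is automatic once we only look at terms touching the support of \(\vp\). For BLP we assume \(\sum m_i<\infty\), as in \cite{rwshscp}.

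The proof of Theorem \ref{tm:mush} is organised as a \emph{local} decomposition, and I will reuse it in the following steps.

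1. Write \(\un\nu^{\un\vr,\un m}L\), applied to \(\vp\), as the bulk contribution over the edges \((k,\,k+1)\), \(k\ge1\), plus the left boundary contribution.
2. For the bulk, use the infinite-volume computation of \cite{rwshscp}, Theorem 2. That result gives, edge by edge, the shock terms \(P\) and \(Q\), plus telescoping "current" remainder terms, each of the form \(J_k-J_{k+1}\) with \(J_k\) a function of the site-\(k\) marginal only. Summing over \(k\ge1\) leaves an unmatched boundary current at site \(1\). The other end does not telescope to a finite boundary; instead its contribution vanishes for \(k>N\), because those terms do not see \(\vp\).
3. Show that the left boundary generator, with the rates \eqref{eq:asepbry} (respectively \eqref{eq:blpbry}), cancels exactly this unmatched current at site \(1\). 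This is the same cancellation as in the finite-volume proof. It is purely local to site \(1\) and uses only the parameters \(\vr_1,\,m_1\) (respectively \(\te_0,\,\te_1\)), never \(K\).

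The reversible parts (\(c\), \(q^+_0\)) contribute zero by detailed balance with respect to \(\mu^{\vr_1}\), or \(\mu^{\te_1}\).

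The main obstacle is making sure that no step of the finite-volume proof secretly pairs the two boundaries, for example through a global normalisation or through the ASEP relation \eqref{eq:asepsokfelt} linking \(\vr_K\) back to \(\vr_1\). It also requires justifying the exchange of the infinite sum with the expectation. For the first issue, I would inspect the telescoping identity and confirm that the boundary remainder at site \(1\) depends only on the site-\(1\) marginal and on \(\vp\). For the second, I would use that \(\vp\) is cylinder, and for BLP also the Gaussian tails of \(\mu^\te\), which give integrability of the rates \(p(\om_k,\,\ze_{k+1})\) through \eqref{eq:ef}.
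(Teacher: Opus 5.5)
Your overall route is the paper's own: redo the proof of Theorem \ref{tm:mush} on the half-line, import the bulk from \cite{rwshscp}, and observe that the cancellation at the wall involves site 1 only. The paper's justification is exactly that the finite-volume proof treats the two boundaries separately. The gap is in step 2.

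As a statement, Theorem 2 of \cite{rwshscp} is a global identity on \(\Zb\). It does not give you an edge-by-edge splitting into shock terms plus remainders \(J_k-J_{k+1}\) with \(J_k\) local. Nor does the far end drop out just because the edges \(k>N\) do not see \(\vp\). The edge \((N,\,N+1)\) does act on \(\vp\) and leaves \(-J_{N+1}\), whose pairing is \(\Eb(\vp)\,\Eb(J_{N+1})\).

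To see this is a real term, take a stretch without second class particles, where the ASEP current is \((p-q)\om_k\). The term is then a nonzero constant times \(\Eb(\vp)\), fixed by \(\vr_{N+1}\). In single-species ASEP with Liggett's boundary, the wall contributes \((p-q)(\om_1-\vr_0)\), and its constant part balances this far-end term. With shocks between site 1 and \(N+1\), however, \(\vr_{N+1}\ne\vr_0\), and the balance is no longer local. So your step 3 (``the boundary generator cancels exactly the current at site 1'') holds only after the constants across the shocks have been pushed into the \(P\), \(Q\) terms, equivalently after centring the \(J_k\). That bookkeeping is the content of the infinite-volume theorem, and your plan would have to reopen its proof to do it.

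The paper never opens that proof. It extends the measure to \(\Zb\) with \(\om_i=\ze_i\sim\mu^{\vr_0}\) for \(i\le0\) and no second class particles outside \(\Kc\). On cylinder \(\vp\) this gives \(\Lk\vp=L^\Zb\vp-\La\vp\) (Lemma \ref{lm:zaux}), where on the half-line \(\La\) is the edge \((0,\,1)\) alone. Theorem 2 of \cite{rwshscp} is then applied to \(L^\Zb\vp\) as a black box. It yields the shock sum, plus the single extra term \(Q(0,\,m_1,\,\vr_0,\,\vr_1)\) for a walker stepping through the wall, and nothing at the far end.

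What remains is explicit at site 1 and splits into two identities under mutually singular measures:
\begin{itemize}
 \item \eqref{eq:qleft}, under \(\Eb^{\un\vr,\un m^{1-}}\). It balances that \(Q\) term against the second-class moves across \((0,\,1)\) in \(\La\). \(\Ly\) cannot help here, since it never moves second class particles, and this identity is missing from your plan.
 \item The \(\om_1\)-half of \eqref{eq:reveng}, where the boundary rates enter.
\end{itemize}
The only place the finite-volume proof could couple the two walls is the free constant \(C\) in \eqref{eq:abrec}, together with its ASEP analogue. It vanishes for BLP by the finite-expectation assumption, and for ASEP because the stated rates annihilate each half separately. That is the concrete form of the ``no secret pairing'' check you anticipated. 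Your remarks on the reversible parts \(c\), \(q^+_0\) and on truncating the infinite sum are fine.
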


\subsection{ASEP in half line}

For the next result, define auxiliary parameters \(m_{K+1}=0\), \(\vr_0\) and \(\vr_{K+1}\) that satisfy
\begin{equation}
 \begin{aligned}
  \frac{\vr_1(1-\vr_0)}{\vr_0(1-\vr_1)}&=\left\{
   \begin{aligned}
    &1,&&\text{if }m_1=0,\\
    &\frac pq,&&\text{if }m_1=1,
   \end{aligned}
  \right.\qquad&\vr_{K+1}&=\vr_K\qquad\text{(ASEP),}\\
  \vr_0-\vr_1&=m_1,&\vr_{K+1}&=\vr_K\qquad\text{(BLP),}
 \end{aligned}\label{eq:rnk}
\end{equation}
c.f.\ \eqref{eq:asepsokfelt} and \eqref{eq:blpsokfelt}. The total number of second class particles will be denoted \(M:\,=\sum_{i=1}^Km_i\) and notice that by recursive application of \eqref{eq:asepsokfelt}, the \(\ell^\text{th}\) ASEP second class particle (\(1\le\ell\le M\)) from the left has densities
\[
 \frac{\bigl(\frac pq\bigr)^{\ell-1}\cdot\vr_0}{1-\vr_0+\bigl(\frac pq\bigr)^{\ell-1}\cdot\vr_0}
 \qquad\text{and}\qquad
 \frac{\bigl(\frac pq\bigr)^\ell\cdot\vr_0}{1-\vr_0+\bigl(\frac pq\bigr)^\ell\cdot\vr_0}
\]
on the left neighbouring site and at its current site, and has right jump rate \eqref{eq:asepshrj}
\begin{equation}
 P^\ell=
 \frac{1-\vr_0}{1-\vr_0+\bigl(\frac pq\bigr)^\ell\cdot\vr_0}\cdot p
 +\frac{\bigl(\frac pq\bigr)^\ell\cdot\vr_0}{1-\vr_0+\bigl(\frac pq\bigr)^\ell\cdot\vr_0}\cdot q
 =p\cdot\frac{1-\vr_0+\bigl(\frac pq\bigr)^{\ell-1}\cdot\vr_0}{1-\vr_0+\bigl(\frac pq\bigr)^\ell\cdot\vr_0}\label{eq:ellthr}
\end{equation}
as long as it is not at the rightmost site of the volume and there is no other second class particle right next to it, and left jump rate \eqref{eq:asepshrb}
\begin{equation}
 Q^\ell=
 \frac{1-\vr_0}{1-\vr_0+\bigl(\frac pq\bigr)^{\ell-1}\cdot\vr_0}\cdot q
 +\frac{\bigl(\frac pq\bigr)^{\ell-1}\cdot\vr_0}{1-\vr_0+\bigl(\frac pq\bigr)^{\ell-1}\cdot\vr_0}\cdot p
 =q\cdot\frac{1-\vr_0+\bigl(\frac pq\bigr)^\ell\cdot\vr_0}{1-\vr_0+\bigl(\frac pq\bigr)^{\ell-1}\cdot\vr_0}\label{eq:ellthl}
\end{equation}
as long as it is not at the leftmost site of the volume and there is no other second class particle at its left neighbouring site. This ordered point of view will be useful for ASEP due to the exclusion component of the second class particles dynamics.

Our result yields stationary distributions of the multi-species models under our special boundary conditions. To explore this, we fist concentrate on the process described by \eqref{eq:mush} of second class particles. We start with the half-line case of ASEP with left boundary only. Define \(X_0^\ell(t)\), \(\ell=1,\,2,\,\dots,\,M\) as the ordered random walkers with the jump rates \eqref{eq:ellthr} and \eqref{eq:ellthl}. The sub-index 0 is a reminder that we have a boundary at site 0 only. Introduce the distances
\begin{equation}
 \begin{aligned}
  D_0^1(t):&=X_0^1(t),\\
  D_0^\ell(t):&=X_0^\ell(t)-X_0^{\ell-1}(t),\qquad1<\ell\le M.
 \end{aligned}\label{eq:disdef}
\end{equation}
\begin{pr}\label{pr:wdbl}
 The product measure (not necessarily \emph{probability} measure) \(\pi=\otimes_{\ell=1}^M\pi_\ell\) with marginals for \(z\ge1\)
 \begin{equation}
  \pi^\ell(z)=\Bigl(\prod_{n=\ell}^M\frac{P^n}{Q^n}\Bigr)^{z-1},\qquad1\le\ell\le M\label{eq:aseprev}
 \end{equation}
 is reversible stationary for the dynamics of the \(D_0\)'s.
\end{pr}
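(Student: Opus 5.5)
The plan is to verify the detailed balance equations directly. This works because, by \eqref{eq:ellthr} and \eqref{eq:ellthl}, the jump rates \(P^\ell\) and \(Q^\ell\) of the \(\ell^\text{th}\) walker depend only on its rank \(\ell\) (and on \(\vr_0\), \(p\), \(q\)), not on its position. The only position dependence enters through the exclusion and boundary constraints.

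First I will translate the dynamics of the ordered walkers \(X_0^\ell\) into dynamics of the gaps \(\un D_0=(D_0^1,\dots,D_0^M)\in\Zb_{\ge1}^M\) from \eqref{eq:disdef}. The positivity \(D_0^1\ge1\) comes from the walkers living on \(\{1,2,\dots\}\), and \(D_0^\ell\ge1\) for \(\ell>1\) comes from the exclusion rule.
\begin{itemize}
 \item A right step of walker \(\ell\) happens at rate \(P^\ell\). It is allowed iff \(\ell=M\) or \(D_0^{\ell+1}\ge2\). Its effect is \(D_0^\ell\mapsto D_0^\ell+1\) and, if \(\ell<M\), \(D_0^{\ell+1}\mapsto D_0^{\ell+1}-1\).
 \item A left step of walker \(\ell\) happens at rate \(Q^\ell\). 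It is allowed iff \(D_0^\ell\ge2\); for \(\ell=1\) this is exactly the reflection at the boundary, since walker 1 cannot leave site 1. Its effect is the inverse: \(D_0^\ell\mapsto D_0^\ell-1\) and, if \(\ell<M\), \(D_0^{\ell+1}\mapsto D_0^{\ell+1}+1\).
\end{itemize}
These are all the transitions of the \(D_0\) chain, and they come in pairs that are inverse to each other.

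Second, I will check that the pairing is consistent with the constraints. Suppose the right step of \(\ell\) is allowed from \(\un D\) and leads to \(\un D'\). Then \(D'^\ell\ge2\), so the left step of \(\ell\) is allowed from \(\un D'\) and returns to \(\un D\). Conversely, suppose the left step is allowed from \(\un D'\) and leads to \(\un D\). Then, if \(\ell<M\), \(D^{\ell+1}=D'^{\ell+1}+1\ge2\), so the right step is allowed from \(\un D\). Hence every transition with positive rate has a reverse transition with positive rate.

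Third, I will verify detailed balance \(\pi(\un D)P^\ell=\pi(\un D')Q^\ell\) for each such pair. Write \(R_\ell:=\prod_{n=\ell}^M P^n/Q^n\), so that \(\pi^\ell(z)=R_\ell^{z-1}\), and set \(R_{M+1}:=1\). Only coordinates \(\ell\) and \(\ell+1\) change, so
\[
 \frac{\pi(\un D')}{\pi(\un D)}=\frac{\pi^\ell(D^\ell+1)}{\pi^\ell(D^\ell)}\cdot\frac{\pi^{\ell+1}(D^{\ell+1}-1)}{\pi^{\ell+1}(D^{\ell+1})}=\frac{R_\ell}{R_{\ell+1}}=\frac{P^\ell}{Q^\ell}.
\]
For \(\ell=M\) the second factor is absent, and the ratio is \(R_M=P^M/Q^M\) directly. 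This is exactly detailed balance, and summing it over transitions gives stationarity.

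No step is a genuine obstacle. The only care needed is the bookkeeping of the constraints in the second step, in particular the boundary at walker 1 and the free last gap \(D_0^M\). The telescoping product in \eqref{eq:aseprev} is designed precisely so that the ratio collapses to \(P^\ell/Q^\ell\). I will also remark that \(\pi\) need not be normalisable: \(R_\ell<1\) is needed for summability, which is why the statement speaks of a not necessarily probability measure.
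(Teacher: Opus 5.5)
Your proposal is correct and is essentially the paper's proof. Both verify detailed balance separately for a move of the last walker, where only \(D_0^M\) changes and the ratio is \(P^M/Q^M\), and for a move of walker \(\ell<M\), where \(D_0^\ell\) and \(D_0^{\ell+1}\) shift in opposite directions and the telescoping product gives the ratio \(P^\ell/Q^\ell\). You also check explicitly that every allowed transition has an allowed reverse, with the correct constraint \(D_0^{\ell+1}\ge2\) for a right step (the paper's proof writes \(y>2\)), which is a small but welcome addition.
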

The structure of \(\pi^\ell\) reveals something remarkable which we cover later in Proposition \ref{pr:rh}. We need the \emph{Rankine-Hugoniot velocity}, which for one dimensional asymmetric particle systems is calculated as the difference of the hydrodynamic flux over that of the densities on the two sides of the macroscopic shock, and the flux is just the expected signed jump rate over an edge in the stationary model. For the parabola flux \((p-q)\vr(1-\vr)\) of ASEP this velocity takes the form
\begin{equation}
 (p-q)\cdot(1-\vr_0-\vr_K)
 =(p-q)\cdot(1-\vr_0-\vr_{K+1})\label{eq:aseprh}.
\end{equation}
Before we continue the stationary explorations for ASEP, we turn to BLP since the rest of the arguments will equally apply for both models.

\subsection{BLP in half line}

Similar ideas as in the ASEP case also reveal stationary distributions for BLP. We again start with the half-infinite setup with left boundary at 0. Recall \eqref{eq:rnk}, and notice that this time multiple second class particles can occupy any given site. We still denote their non-strictly ordered locations by \(X_0^\ell(t)\), \(\ell=1,\,2,\,\dots,\,M\) but this time a second class particle can only move left (right) if it has the lowest (highest, respectively) label on its site. With the same formal definitions \eqref{eq:disdef} of inter-walker distances as before, we now have \(D^1_0(t)\ge1\) but \(D^\ell_0(t)\ge0\) for \(1<\ell\le M\). The locations \(X^\ell_0(t)\) and therefore the number \(M_i(t)\ge0\) of second class particles at site \(i\) are a function of these:
\begin{equation}
 M_i(t)=\max\{\ell\,:\,X^\ell_0(t)\le i\}-\max\{\ell\,:\,X^\ell_0(t)\le i-1\},\qquad i=1\dots K\label{eq:midef}
\end{equation}
where \(\max\{\emptyset\}\) is understood as 0. We nevertheless use \(M_i(t)\) where convenient. With this notation (and omitting the time dependence), the right jump rate \eqref{eq:blpshrj} of the \(\ell^\text{th}\) second class particle is
\[
 P^\ell=\left\{
  \begin{aligned}
   &0,&&\text{if }\ell<M\text{ and }X^\ell_0=X^{\ell+1}_0,\\
   &\e{\te_0-\be(\ell-M_{X^\ell_0})}-\e{\te_0-\be\ell}
   =\e{\te_0-\be\ell}\Bigl(\e{\be M_{X^\ell_0}}-1\Bigr),&&\text{otherwise,}
  \end{aligned}
 \right.
\]
and the left jump rate \eqref{eq:blpshrb} is
\[
 Q^\ell=\left\{
  \begin{aligned}
   &0,&&\text{if }\ell>1\text{ and }X^{\ell-1}_0=X^\ell_0,\\
   &\e{-\te_0+\be(\ell+M_{X^\ell_0}-1)}-\e{-\te_0+\be(\ell-1)}
   =\e{-\te_0+\be(\ell-1)}\Bigl(\e{\be M_{X^\ell_0}}-1\Bigr),&&\text{otherwise.}
  \end{aligned}
 \right.
\]
After some trial and error one arrives at
\begin{pr}\label{pr:blprev}
 The (not necessarily probability) measure
 \begin{equation}
  \pi(d_0^1,\,\dots,\,d_0^M):\,=\Bigl(\bigotimes_{\ell=1}^Me^{\bigl(\be(\ell^2-2\ell+1-M^2)+2\te_0\cdot(M-\ell+1)\bigr)d^\ell_0}\Bigr)\cdot\Bigl(\bigotimes_{i=1}^K\prod_{n=1}^{m_i}\frac1{\e{\be n}-1}\Bigr),\label{eq:blpstati}
 \end{equation}
 where the \(m_i\) are calculated from the \(d_0^\ell\) as in \eqref{eq:midef} and empty products are defined to be 1, is reversible stationary for the \(D_0^\ell(t)\)'s.
\end{pr}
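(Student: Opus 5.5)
The plan is to verify detailed balance directly for every elementary transition of the vector \((D_0^1,\dots,D_0^M)\), which suffices for reversibility since \(\pi\) is positive on the state space. The state space is \(D_0^1\ge1\), \(D_0^\ell\ge0\) for \(\ell>1\). Its transitions are exactly the single moves of one second class particle. A right step of the \(\ell^\text{th}\) particle changes \(d^\ell_0\mapsto d^\ell_0+1\) and, if \(\ell<M\), \(d^{\ell+1}_0\mapsto d^{\ell+1}_0-1\). A left step is the inverse move.

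First I check that the allowed moves come in reverse pairs. A right step of particle \(\ell\) from site \(x\) to \(x+1\) has positive rate exactly when \(\ell\) carries the highest label at \(x\) (and \(M_x\ge1\), so \(\e{\be M_x}-1>0\)). After the step, \(\ell\) carries the lowest label at \(x+1\), so its left step back has positive rate, and conversely. At the boundary, a left step from site \(1\) is excluded by the reflecting mechanism of Theorem \ref{tm:mush}. Its would-be reverse, a right step into site \(1\) from site \(0\), does not exist either. Hence it suffices to check
\[
\pi(d)\,P^\ell(d)=\pi(d')\,Q^\ell(d')
\]
for a generic right step \(d\to d'\) of particle \(\ell\).

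Second, I compute \(\pi(d')/\pi(d)\). Write the exponent coefficient of \(d^\ell_0\) as
\[
a_\ell=\be(\ell-1)^2-\be M^2+2\te_0(M-\ell+1).
\]
For \(\ell<M\) the exponential factor changes by \(\e{a_\ell-a_{\ell+1}}=\e{2\te_0-\be(2\ell-1)}\). For \(\ell=M\) it changes by \(\e{a_M}=\e{2\te_0-\be(2M-1)}\), which is the same formula; the term \(-\be M^2\) in \eqref{eq:blpstati} is there precisely to make this work. For the combinatorial factor, let \(m=M_x\) and \(m'=M_{x+1}\) before the step. Site \(x\) goes from \(m\) to \(m-1\) and site \(x+1\) from \(m'\) to \(m'+1\), so the product factor changes by \((\e{\be m}-1)/(\e{\be(m'+1)}-1)\).

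Third, I compare this with the rates. From the formulas preceding the proposition, \(P^\ell(d)=\e{\te_0-\be\ell}(\e{\be m}-1)\). In \(d'\) the particle sits at \(x+1\) with occupation \(m'+1\), so \(Q^\ell(d')=\e{-\te_0+\be(\ell-1)}(\e{\be(m'+1)}-1)\). Their ratio is
\[
\frac{P^\ell(d)}{Q^\ell(d')}=\e{2\te_0-\be(2\ell-1)}\cdot\frac{\e{\be m}-1}{\e{\be(m'+1)}-1},
\]
which is exactly \(\pi(d')/\pi(d)\). This establishes detailed balance.

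The main obstacle I expect is not this algebra but justifying that the rates \(P^\ell,\,Q^\ell\) depend only on \(d\). I would use \eqref{eq:blpsokfelt} to write \(\te(\vr_i)=\te_0-\be\sum_{j\le i}m_j\). The label ordering then gives \(\te(\vr_x+M_x)=\te(\vr_x)+\be M_x=\te_0-\be(\ell-M_x)\) for the top label \(\ell\) at \(x\), and similarly for the bottom label in \(Q^\ell\). I would also need to confirm that the dynamics of the \(D_0^\ell\) is Markov, via Theorem \ref{tm:mush} and its half-line Corollary.
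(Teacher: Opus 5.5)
Your proposal is correct and follows the paper's proof: both check detailed balance for a right step of the \(\ell^\text{th}\) particle against its reverse left step. In each case the change in the exponential weights and in the \(\prod_n(\e{\be n}-1)^{-1}\) factors is matched with the ratio \(P^\ell/Q^\ell\). The only difference is cosmetic: the paper treats \(\ell=M\) and \(\ell<M\) as two separate cases, whereas you merge them by noting that \(a_M\) equals the formula for \(a_\ell-a_{\ell+1}\) at \(\ell=M\), i.e.\ effectively \(a_{M+1}=0\).
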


To derive the Rankine-Hunogiot velocity for BLP, the flux i.e., the expected jump rate, is calculated by \eqref{eq:ef}. Under \(\mu^\te\) it takes the form \(\e\te+\e{-\te}\), hence the Rankine-Hugoniot velocity this time is
\begin{equation}
 \frac{\e{\te_0-\be M}+\e{-\te_0+\be M}-\e\te_0-\e{-\te_0}}{-M}=\bigl(\e{2\te_0-\be M}-1\bigr)\cdot\e{-\te_0}\cdot\frac{\e{\be M}-1}M.\label{eq:blprh}
\end{equation}

\subsection{Stationarity}

In this section we conclude some properties that equally hold for ASEP and BLP.
\begin{pr}\label{pr:rh}
 The product measure \(\pi\) \eqref{eq:aseprev} and \eqref{eq:blpstati} for the half-line model with left boundary above can be normalised to a probability distribution if and only if the Rankine-Hugoniot velocity \eqref{eq:aseprh} or \eqref{eq:blprh}, respectively, is negative.
\end{pr}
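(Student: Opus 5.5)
The approach is to use the fact that in both cases \(\pi\) is, up to a bounded factor, a product of geometric weights in the distances \(d_0^\ell\). Normalisability is therefore equivalent to every geometric ratio being strictly less than one. It then remains to reduce this family of \(M\) conditions to a single one and to identify that one with the sign of the Rankine-Hugoniot velocity.

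\emph{ASEP.} Here \(D_0^\ell\ge1\) for every \(\ell\) because of exclusion, so \(\pi\) is a finite measure if and only if \(r_\ell:=\prod_{n=\ell}^M P^n/Q^n<1\) for all \(1\le\ell\le M\). Write \(\lambda=p/q\) and \(A_n=1-\vr_0+\lambda^n\vr_0\). By \eqref{eq:ellthr} and \eqref{eq:ellthl} we have \(P^n=pA_{n-1}/A_n\) and \(Q^n=qA_n/A_{n-1}\), so the product telescopes to \(r_\ell=\lambda^{M-\ell+1}A_{\ell-1}^2/A_M^2=\bigl(h(\ell-1)/h(M)\bigr)^2\), where
\[
 h(k):=\lambda^{-k/2}A_k=(1-\vr_0)\lambda^{-k/2}+\vr_0\lambda^{k/2}.
\]
The key observation is that \(h\) is a sum of two exponentials in \(k\) and hence strictly convex (or constant when \(\lambda=1\), where nothing needs proving as the velocity is then zero). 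The condition \(h(M)>h(k)\) for all \(0\le k<M\) is therefore equivalent to the single inequality \(h(M)>h(0)=1\). Multiplying by \(\lambda^{M/2}\), this reads
\[
 \bigl(\lambda^{M/2}-1\bigr)\bigl(\vr_0\lambda^{M/2}-(1-\vr_0)\bigr)>0.
\]
On the other side, iterating \eqref{eq:asepsokfelt} gives \(\vr_K=\lambda^M\vr_0/A_M\), and a short computation yields
\[
 1-\vr_0-\vr_K=\frac{(1-\vr_0)^2-\lambda^M\vr_0^2}{A_M}.
\]
Hence \eqref{eq:aseprh} is negative if and only if \((p-q)\bigl((1-\vr_0)^2-\lambda^M\vr_0^2\bigr)<0\). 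Checking the two cases \(\lambda>1\) and \(\lambda<1\), this matches the displayed inequality exactly.

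\emph{BLP.} First, the factor \(\prod_i\prod_{n=1}^{m_i}(\e{\be n}-1)^{-1}\) takes only finitely many values, since \(m_i\le M\) and at most \(M\) sites carry second class particles. It is therefore bounded above and below by positive constants, and normalisability is decided by the exponential part alone. Since \(D_0^1\) ranges over \(\Zb_{\ge1}\) and \(D_0^\ell\) over \(\Zb_{\ge0}\) with no further constraint, \(\pi\) is finite if and only if every exponent \(c_\ell=\be(\ell^2-2\ell+1-M^2)+2\te_0(M-\ell+1)\) is negative. Substituting \(k=M-\ell+1\in\{1,\dots,M\}\) gives \(c_\ell=k\bigl(\be k-2\be M+2\te_0\bigr)\), which is increasing in \(k\). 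So all \(c_\ell<0\) if and only if the case \(k=M\) holds, i.e.\ \(2\te_0<\be M\). Since \(\be>0\) and \((\e{\be M}-1)/M>0\), this is exactly the negativity of \eqref{eq:blprh}.

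The main obstacle is the ASEP reduction from \(M\) conditions to one. I plan to handle it by passing to the telescoped form \(r_\ell=(h(\ell-1)/h(M))^2\) and invoking convexity of \(h\). The remaining work is routine algebra to match signs with \eqref{eq:aseprh}, treating \(p>q\) and \(p<q\) separately.
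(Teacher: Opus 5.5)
Your proof is correct and follows the paper's route. In both, the product \(\prod_{n=\ell}^MP^n/Q^n\) telescopes into \(\bigl(h(\ell-1)/h(M)\bigr)^2\) with \(h\) a positive combination of two exponentials, convexity reduces everything to the \(\ell=1\) condition, that condition is matched with the sign of \eqref{eq:aseprh}, and for BLP the \(\ell=1\) exponent is the binding one. Your chord argument through \(h(0)\) and \(h(M)\) neatly avoids the paper's separate evaluation and comparison of the \(\ell=M\) threshold, and your two-sided bound on the \(m_i\)-factor makes the ``only if'' direction explicit.

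One small correction: \(c_\ell=k(\be k-2\be M+2\te_0)\) is not itself increasing in \(k\) (for \(\te_0=0\), \(M\ge2\) it decreases). Since \(k>0\), however, its sign is that of \(\be k-2\be M+2\te_0\), which is increasing, so your conclusion \(2\te_0<\be M\) stands.
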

This makes perfect sense as otherwise we expect the second class particles to diffuse or drift away from the left boundary. Their group velocity was shown to agree with this Rankine-Hugoniot formula in the no-boundary case (\cite[Appendix]{rwshscp} for ASEP, \cite[Lemmas 4.2, 4.3]{sokvalak} for BLP). Since they move as a tight group in the no-boundary case, one also expects that the normalisation first fails for \(\ell=1\) and the proofs of this proposition in Section \ref{sc:sd} indeed confirm this.

The half-line case with right boundary only is completely analogous.

Finally, the finite \(\Kc\) case with both boundaries present is easily obtained since restricting a reversible stationary Markov chain to a subset of its sample space only conditions the reversible stationarity. We denote the same \(M\) many \(D\) variables as \(D_{0K}\) to emphasise that both boundaries are present. The walkers \(X_{0K}\) are ordered as before, and they can be anywhere in \(\Kc\). Hence the \(D_{0K}\) are restricted to the space
\[
 \sum_{\ell=1}^MD_{0K}^\ell\le K.
\]
\begin{cor}
 For both ASEP and BLP, the reversible stationary distribution of the \(D_{0K}\)'s is given by
 \[
  \frac{\pi(\cdot)}{\pi\bigl\{\sum_{\ell=1}^MD_{0K}^\ell\le K\bigr\}}
 \]
 from the respective measures \eqref{eq:aseprev} and \eqref{eq:blpstati}.
\end{cor}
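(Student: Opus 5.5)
The plan is to read the corollary as an instance of the general fact that restricting a reversible chain to a subset of its state space keeps the restricted measure reversible. I would first describe the $D_{0K}$ chain precisely as a restriction of the half-line $D_0$ chain. Encode the walkers $X_{0K}^1\le\dots\le X_{0K}^M$ by their gaps $D_{0K}^\ell$, defined through \eqref{eq:disdef}. Then $X_{0K}^M=\sum_\ell D_{0K}^\ell$, and the walkers all lie in $\Kc$ exactly when $\sum_\ell D_{0K}^\ell\le K$; the constraint $D^1\ge1$ already places the left walker at site $1$ or beyond.

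By Theorem \ref{tm:mush}, in the two-sided model the transition rates of the walkers are given by \eqref{eq:asepshrj}, \eqref{eq:asepshrb}, and by \eqref{eq:blpshrj}, \eqref{eq:blpshrb}. For $1\le i\le K-1$ these formulas depend only on $\un m$ and the local densities. By \eqref{eq:asepsokfelt}, \eqref{eq:blpsokfelt} and \eqref{eq:rnk}, those densities are determined by $\vr_0$ (or $\te_0$) and the walker positions, exactly as in the half-line model. The one difference is that the sum in \eqref{eq:mush} stops at $i=K-1$, so a walker at site $K$ has no right jump. Hence the $D_{0K}$ chain has the same rates as the $D_0$ chain for every transition between states of $S:=\{\sum_\ell d^\ell\le K\}$. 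The transitions of $D_0$ that leave $S$ are exactly right jumps of the top walker from site $K$ to $K+1$, and these are simply suppressed. I would also check that the densities depend only on $\vr_0$ and the gaps, so that the bookkeeping agrees. This holds because $\vr_0$ is the density to the left of all walkers and stays fixed: by \eqref{eq:rnk}, and by the parametrisation of $P^\ell$ and $Q^\ell$ through $\vr_0$ and $\te_0$ given before Propositions \ref{pr:wdbl} and \ref{pr:blprev}, the rates are expressed in terms of $\ell$, $\vr_0$ (or $\te_0$) and the local occupation numbers only.

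Next I would invoke Propositions \ref{pr:wdbl} and \ref{pr:blprev}. These state that $\pi$ satisfies detailed balance $\pi(x)r(x,y)=\pi(y)r(y,x)$ for all states $x,y$ of the half-line $D_0$ chain. For $x,y\in S$ the restricted chain has the same rates $r(x,y)$ and $r(y,x)$, so detailed balance still holds for $\pi|_S$. Summing the detailed balance equations over $y$ then gives stationarity.

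It remains to normalise. The set $S$ is finite, since there are finitely many gap vectors with $\sum d^\ell\le K$, and $\pi$ is strictly positive on it. So $\pi(S)\in(0,\infty)$ regardless of the sign of the Rankine--Hugoniot velocity, and dividing by it gives the stated probability measure. I expect the main obstacle to be the rate identification above, in particular for BLP. There the rates depend on the multiplicities $M_i$ through \eqref{eq:midef} and on labels shared within a site. I must confirm that the boundary at $K$ affects the BLP walker chain only by deleting the jump from $K$ to $K+1$ and changes no rate inside the volume. This is consistent with the remark that the proofs handle the two boundaries separately and that \eqref{eq:mush} contains no boundary terms.
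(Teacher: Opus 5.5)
Your proposal is correct and follows the paper's own argument. The paper likewise treats the $D_{0K}$ chain as the half-line $D_0$ chain with the jumps leaving $\{\sum_{\ell}D^\ell_{0K}\le K\}$ suppressed, so the detailed balance from Propositions \ref{pr:wdbl} and \ref{pr:blprev} persists on the allowed transitions. Your explicit matching of the rates from \eqref{eq:mush} and your observation that this set is finite, so normalisation works whatever the sign of the Rankine--Hugoniot velocity, fill in details the paper leaves implicit.
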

\begin{proof}
 Detailed balance on the allowed transitions works exactly as before.
\end{proof}

Finally, the cases when \(\pi\) can be normalised (either because the Rankine-Hugoniot velocity points towards the one-sided boundary, or in the finite volume case) allows a description of a stationary distribution of the models, this is a straightforward corollary of all our results so far.
\begin{cor}
 Fix \(M>0\) and assume \(\pi\) can be normalised to a probability distribution. Draw variables \(D^1,\,\dots D^M\) from this distribution, and define \(M_i\) as in \eqref{eq:midef}. Given these random occupations, write \(\un\nu^{\un\vr,\un m}\) from \eqref{eq:nurm}. The annealed outcome is stationary for the ASEP or BLP, respectively.
\end{cor}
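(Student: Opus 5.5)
The plan is to read Theorem \ref{tm:mush} as a closed linear evolution on the finite family of measures \(\{\un\nu^{\un\vr,\un m}\}\), indexed by the second class particle configuration \(\un m\) alone. Once the left density (\(\vr_0\), equivalently \(\te_0\)) is fixed, \eqref{eq:asepsokfelt} or \eqref{eq:blpsokfelt} determines every \(\vr_i\) from \(\un m\). The right hand side of \eqref{eq:mush} is then the generator \(G\) of the Markov chain \(\un m(t)\) of second class particles, acting on the measure-valued map \(\un m\mapsto\un\nu^{\un\vr(\un m),\un m}\). A necessary check here is that \(\vrir\) and \(\vril\) coincide with the density profile determined by \(\un m^{i,i+1}\), respectively \(\un m^{i+1,i}\), under the same fixed \(\vr_0\). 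For ASEP this means the recursion shifts by one second class particle; for BLP it means \(\vr_i\mapsto\vr_i\pm1\), and this follows directly from \eqref{eq:blpsokfelt}.

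Let \(\lambda\) be the annealed law, \(\lambda=\sum_{\un m}\pi'(\un m)\,\un\nu^{\un\vr(\un m),\un m}\), where \(\pi'\) is the normalised \(\pi\) transported from the \(D\) variables to \(\un m\) via \eqref{eq:midef}. This transport is a bijection, because ordered walkers carry no extra information beyond \(\un m\). By linearity of \(S(t)\) and Theorem \ref{tm:mush},
\[
 \frac{\di}{\di t}\lambda S(t)\Bigr|_{t=0}=\sum_{\un m}\pi'(\un m)\,(G\Phi)(\un m),\qquad \Phi(\un m):=\un\nu^{\un\vr(\un m),\un m}.
\]
Exchanging the summation turns this into \(\sum_{\un m}(\pi'G)(\un m)\,\Phi(\un m)\). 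The quantity \(\pi'G\) vanishes because \(\pi'\) is stationary for the \(\un m\)-chain; this comes from Propositions \ref{pr:wdbl}, \ref{pr:blprev} and the finite volume corollary. The chain being stationary for is the one whose rates \(P\), \(Q\) appear in \eqref{eq:mush} and which were rewritten as \(P^\ell\), \(Q^\ell\).

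To upgrade from zero derivative at \(t=0\) to stationarity for all times, I would argue as follows. Theorem \ref{tm:mush} shows the family \(\{\sum_{\un m}a(\un m)\Phi(\un m)\}\) is invariant under the generator, so \(\lambda S(t)=\sum_{\un m}(\pi'\e{tG})(\un m)\Phi(\un m)\) solves the forward equation. In finite volume for ASEP, uniqueness of solutions is automatic. For BLP, or in the half-line case, I would cite the uniqueness used in \cite{rwshscp}. Then \(\pi'\e{tG}=\pi'\) gives \(\lambda S(t)=\lambda\).

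The main obstacle is this uniqueness and interchange step for infinite configurations: BLP has unbounded occupations, and the half-line case involves an infinite sum over \(\un m\). I would first try to justify it by testing against bounded cylinder functions and invoking the construction of \cite{exists}, exactly as \cite{rwshscp} does in infinite volume. A smaller point is the identification of the \(\un m\)-chain rates with those of the \(D\)'s. For BLP the rule that only the extremal label on a site moves must match the aggregate rate \eqref{eq:blpshrj} of \(\un m\to\un m^{i,i+1}\); this is immediate, since exactly one walker realises each such transition.
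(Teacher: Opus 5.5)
Your proposal is correct and is the argument the paper has in mind (the paper gives no proof beyond calling the statement a straightforward consequence of Theorem~\ref{tm:mush} and the stationarity results). You read \eqref{eq:mush} as the generator of the second class particle chain on \(\un m\), mix over \(\un m\) with its reversible law \(\pi\), and conclude from \(\pi G=0\); your consistency check on \(\vrir,\,\vril\) and your \(t=0\)-to-all-\(t\) discussion are more careful than anything in the paper.

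The one implicit step worth writing out is that a single semigroup \(S(t)\) governs every term of the mixture, i.e.\ the boundary rates \eqref{eq:asepbry}, \eqref{eq:blpbry} do not vary with \(\un m\). They depend only on \(\vr_0\) (resp.\ \(\te_0\)), on the conserved \(M\) through \(\vr_K\) (resp.\ \(\te_K\)), and on the current state. For instance, the ASEP rates at site 1 only act when \(m_1=0\), where \(\vr_1=\vr_0\), and the BLP \(\te_1=\te_0-\be m_1\) is read off from \((\om_1,\,\ze_1)\).
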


Notice that our finite models when both boundaries are present have a fixed number of second class particles but are irreducible on this countable state space. This implies
\begin{cor}
 When both boundaries are present, the stationary distribution described above is unique on the irreducible state space of a given \(M\) number of second class particles.
\end{cor}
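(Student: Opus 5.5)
The plan is to derive uniqueness from the standard fact that an irreducible, positive recurrent continuous-time Markov chain on a countable state space has at most one stationary distribution. The main work is to identify the correct state space and show the process is irreducible and non-explosive on it.

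\emph{State space.} Fix \(M\). The state space is the set of coupled configurations \((\un\om,\,\un\ze)\in I^\Kc\times I^\Kc\) with \(\om_i\le\ze_i\) for all \(i\) and \(\sum_i(\ze_i-\om_i)=M\). For ASEP this set is finite. For BLP it is countable, since \(\un\om\in\Zb^\Kc\) and the second class particle placement ranges over finitely many options. Conservation of \(M\) follows from the bulk basic coupling together with the boundary moves \eqref{eq:bdyr}, which change \(\om\) and \(\ze\) at the same site by the same amount.

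\emph{Irreducibility.} I would check this in three steps.
\begin{itemize}
 \item First, the second class particles can be moved to any admissible positions. The rates \(P\), \(Q\) in \eqref{eq:asepshrj}, \eqref{eq:asepshrb}, \eqref{eq:blpshrj}, \eqref{eq:blpshrb} are positive whenever a move is allowed, at least for \(0<\vr<1\) in ASEP and for \(m_i\ge1\) in BLP. On the microscopic level, the corresponding transitions in \(\Lk\) also have positive rates in suitable configurations.
 \item Second, first class particle content at each site can be adjusted using the boundary moves. In ASEP, \(p_0,\,q_0,\,p_K,\,q_K\) are positive when \(c,\,d>-\min(p,q)\) and the boundary densities lie strictly between 0 and 1. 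In BLP, \(p_0,\,p_K>0\) always, and \(q_0,\,q_K\) may be zero. Even without \(q^+\), though, mass can leave the system to the right at rate \(p_K>0\) and enter from the left at rate \(p_0>0\).
 \item Third, this entering and leaving mass is transported through the bulk. Bulk transport uses the ASEP exchanges, or the positive BLP rates \(p(y,z)>0\) together with the coupled moves.
\end{itemize}
Combining these, any target configuration is reached by a path of positive-rate transitions. Mostly this is a routine case check. The delicate part is routing first class particles past second class particles. In ASEP this can be done through the swap moves in the \(q(\ze_k,\,\om_{k+1})\) and \(p(\om_k,\,\ze_{k+1})\) terms; in BLP, \(\om\) and \(\ze\) jumps can be made jointly or separately.

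\emph{Stationarity and positive recurrence.} The preceding corollary already exhibits a stationary probability distribution. For an irreducible chain that is non-explosive, the existence of a stationary probability measure implies positive recurrence. Uniqueness then follows from the standard theorem. In the finite ASEP case non-explosiveness is automatic. For BLP the total jump rates grow like \(\e{\be|\om|}\), so non-explosion needs separate justification. My first attempt would be a Lyapunov-type bound, using that particle number changes only at the boundaries with rates \(p_0,\,p_K\) controlled by \(f(\mp\om)\). Failing that, I would appeal to the constructions in \cite{exists}, which handle BLP existence.

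I expect the main obstacle to be these BLP technicalities on the countable state space: verifying non-explosion and irreducibility cleanly. The ASEP case should reduce to a finite irreducible chain, where uniqueness is immediate.
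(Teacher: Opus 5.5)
This is the paper's argument. There the corollary is read off in one line from irreducibility of the fixed-\(M\) chain on its countable state space, together with the stationary measure of the preceding corollary; your path constructions supply the irreducibility that the paper only asserts.

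Two small points on that part.
\begin{itemize}
 \item You do not need all four boundary rates positive. Entry at one end of \(\Kc\) and exit at the other always has positive rate: in BLP because \(p_0,\,p_K>0\), and in ASEP whenever \(0<\vr_0<1\) and \(p\ne q\), even at \(c\) or \(d=-\min(p,\,q)\).
 \item In ASEP you attribute the first/second class swaps to the wrong terms. The terms \(p(\om_k,\,\ze_{k+1})\) and \(q(\ze_k,\,\om_{k+1})\) exchange a first class particle with a hole. The swaps \(1\,2\leftrightarrow2\,1\) between a first and a second class particle come from \(p(\om_k,\,\om_{k+1})-p(\om_k,\,\ze_{k+1})\) and \(q(\om_k,\,\om_{k+1})-q(\ze_k,\,\om_{k+1})\), at rates \(p\) and \(q\).
\end{itemize}

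The step you leave open, non-explosion for BLP, is a genuine one, and the paper also passes over it. Let \(G\) be the rate matrix of the fixed-\(M\) chain and \(\lambda\) the annealed measure. Theorem \ref{tm:mush} together with detailed balance of \(\pi\) only gives \(\lambda G=0\), and for an explosive chain this does not make \(\lambda\) invariant.

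You need neither a Lyapunov function nor \cite{exists}; the latter concerns the infinite-volume construction and does not cover these boundary mechanisms. The missing observation is that \(\lambda\) has finite expected total jump rate, \(\sum_x\lambda(x)g(x)<\infty\) with \(g(x)=-G(x,\,x)\).
\begin{itemize}
 \item The bulk rates and the fixed boundary parts are of order \(\e{\be|\om|}\), which is integrable against the Gaussian tails of \(\mu^\te\).
 \item The parts \(p^+_0,\,q^+_0,\,p^+_K,\,q^+_K\) have finite expectation by the assumption the paper makes in deriving \eqref{eq:truth}.
\end{itemize}

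With this, the argument closes as follows. The minimal semigroup \(P_t\) satisfies the backward equation, so \(|\frac{\di}{\di t}P_t(x,\,y)|\le2g(x)\). You may therefore differentiate \(\sum_x\lambda(x)P_t(x,\,y)\) term by term and use Fubini to get \(\frac{\di}{\di t}(\lambda P_t)(y)=\sum_z(\lambda G)(z)P_t(z,\,y)=0\), that is, \(\lambda P_t=\lambda\). Since \(\lambda\) charges every state, \(\sum_y(\lambda P_t)(y)=1\) forces \(\sum_yP_t(x,\,y)=1\) for every \(x\), which is non-explosion. Irreducibility then gives positive recurrence and uniqueness, exactly as you planned.
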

We leave the uniqueness question open for half-line models.

\section{Reverse engineering}

The proof of Theorem \ref{tm:mush} consists of revisiting the arguments for the bulk from \cite{rwshscp}, and brute-force finding the boundary rates \eqref{eq:asepbry} \eqref{eq:blpbry} that keep this construction valid in our finite volume scenario. In fact we can make direct use of the calculations in \cite{rwshscp} via the following trick. Recall \eqref{eq:rnk} and introduce the auxiliary random variables \(\om_0=\ze_0\sim\mu^{\vr_0}\) and \(\om_{K+1}=\ze_{K+1}\sim\mu^{\vr_{K+1}}\), mutually independent of each other and the rest of \((\un\om,\,\un\ze)\). Adding these auxiliary random variables, we can think of an extended pair \((\un\om,\,\un\ze)\) distributed according to the extended product measure \(\un\nu^{\un\vr,\un m}\) with no second class particles added to sites \(0\) and \(K+1\). The choice of no second class particles on the new sites is arbitrary but irrelevant, and keeps our argument simple.

Introduce the generator \(L^{(0\dots K+1)}\) by the same formula as \(\Lk\) of \eqref{eq:lbulkdef}, except that we run the summation from \(k=0\) to \(K\) and include the new variables \(\om_0=\ze_0\), \(\om_{K+1}=\ze_{K+1}\). We then have
\begin{equation}
 (\Lk\vp)(\un\om,\,\un\ze)=(L^{(0\dots K+1)}\vp)(\un\om,\,\un\ze)-\La\vp(\un\om,\,\un\ze),\label{eq:lb0a}
\end{equation}
with the bits concerning the auxiliary variables at sites 0 and \(K+1\) subtracted off:
\begin{equation}
 \begin{aligned}
  (\La\vp)(\un\om,\,\un\ze)=\phantom{[}&p(\om_0,\,\ze_1)\cdot\bigl[\vp(\un\om^{0,1},\,\un\ze^{0,1})-\vp(\un\om,\,\un\ze)\bigr]\\
  +\bigl[&p(\om_0,\,\om_1)-p(\om_0,\,\ze_1)\bigr]\cdot\bigl[\vp(\un\om^{0,1},\,\un\ze)-\vp(\un\om,\,\un\ze)\bigr]\\
  +\phantom{\bigl[}&q(\ze_0,\,\om_1)\cdot\bigl[\vp(\un\om^{1,0},\,\un\ze^{1,0})-\vp(\un\om,\,\un\ze)\bigr]\\
  +\bigl[&q(\ze_0,\,\ze_1)-q(\ze_0,\,\om_1)\bigr]\cdot\bigl[\vp(\un\om,\,\un\ze^{1,0})-\vp(\un\om,\,\un\ze)\bigr]\\
  +\phantom{\bigl[}&p(\om_K,\,\ze_{K+1})\cdot\bigl[\vp(\un\om^{K,K+1},\,\un\ze^{K,K+1})-\vp(\un\om,\,\un\ze)\bigr]\\
  +\bigl[&p(\ze_K,\,\ze_{K+1})-p(\om_K,\,\ze_{K+1})\bigr]\cdot\bigl[\vp(\un\om,\,\un\ze^{K,K+1})-\vp(\un\om,\,\un\ze)\bigr]\\
  +\phantom{\bigl[}&q(\ze_K,\,\om_{K+1})\cdot\bigl[\vp(\un\om^{K+1,K},\,\un\ze^{K+1,K})-\vp(\un\om,\,\un\ze)\bigr]\\
  +\bigl[&q(\om_K,\,\om_{K+1})-q(\ze_K,\,\om_{K+1})\bigr]\cdot\bigl[\vp(\un\om^{K+1,K},\,\un\ze)-\vp(\un\om,\,\un\ze)\bigr].
 \end{aligned}\label{eq:lauxdef}
\end{equation}
Introduce also the generator \(L^\Zb\) that has the summation of \eqref{eq:lbulkdef} on the entire \(\Zb\) (with further \((\om_k,\,\ze_k)\) variables).

\begin{rem}
 For the generator \(\Lk\) on the cylinder function \(\vp\,:\,I^\Kc\to\Rb\) with \(\Kc=\{1\dots K\}\), it doesn't matter what the hypothetical \((\om_k,\,\ze_k)\) variables are for \(k\) outside \(\Kc\) as nothing in \(\Lk\vp\) depends on these.
\end{rem}
 Hence we first extend the density to sites 0 and \(K+1\) via \eqref{eq:asepsokfelt} for ASEP and \eqref{eq:blpsokfelt} for BLP. We then proceed by extending the product measure \(\un\nu^{\un\vr,\un m}\) to the infinite volume \(\Zb\) by postulating that second class particles are only present in \(\Kc\), and \(\om_i=\ze_i\sim\mu^{\vr_0}\) for \(i\le0\); \(\om_i=\ze_i\sim\mu^{\vr_{K+1}}\) for \(i\ge K+1\), keeping full independence across sites of \(\Zb\). From now on, \(\Eb^{\un\vr,\un m}\) refers to this extended measure wherever we need it for a generator acting outside \(\Kc\).
With all these generators, our strategy is to use \cite{rwshscp} as much as possible. Hence we calculate the mean of \(\Lk\) in multiple steps:
\begin{enumerate}
 \item \eqref{eq:lb0a} allows us to calculate the mean of \(L^{(0\dots K+1)}\vp\) and \(\La\vp\) instead of that of \(\Lk\vp\).
 \item As \(\vp\) is cylinder on \(\Kc=\{1\dots K\}\), \(L^{(0\dots K+1)}\vp=L^\Zb\vp\) so we can turn to this latter instead.
 \item The calculation for the mean of \(L^\Zb\vp\) can be borrowed from \cite{rwshscp}.
 \item Hence we just need to calculate \(\La\vp\) under the expectation.
\end{enumerate}
This program is detailed by the lemmas below.
\begin{lm}\label{lm:zaux}
 For our original test function \(\vp\,:\,I^\Kc\to\Rb\) and any distribution (the expectation of which we denote by \(\Eb\)),
 \[
  \Eb(\Lk\vp)(\un\om,\,\un\ze)=
  \Eb(L^\Zb\vp)(\un\om,\,\un\ze)-
  \Eb(\La\vp)(\un\om,\,\un\ze).
 \]
\end{lm}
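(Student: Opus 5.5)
The identity will in fact hold pointwise, before any expectation is taken. Linearity of \(\Eb\) then finishes the proof. So the plan is to establish
\[
 (\Lk\vp)(\un\om,\,\un\ze)=(L^\Zb\vp)(\un\om,\,\un\ze)-(\La\vp)(\un\om,\,\un\ze)
\]
for every configuration \((\un\om,\,\un\ze)\in(I\times I)^\Zb\) with \(\om_k\le\ze_k\). This reduces to two pointwise identities: \eqref{eq:lb0a}, and \(L^{(0\dots K+1)}\vp=L^\Zb\vp\) for \(\vp\) cylinder on \(\Kc\).

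For \eqref{eq:lb0a}, I compare term by term. \(L^{(0\dots K+1)}\) is the sum of \eqref{eq:lbulkdef} over edges \(k=0,\dots,K\), so it equals \(\Lk\) plus the contributions of the edges \((0,1)\) and \((K,K+1)\). On edge \((0,1)\), the two summands that move only \(\un\ze\) from site \(0\), or that move \(\un\om\) alone leftwards from site \(1\), would produce \(\un\ze^{0,1}\) or \(\un\om^{1,0}\) separately. Their rates are
\[
 p(\ze_0,\ze_1)-p(\om_0,\ze_1)\qquad\text{and}\qquad q(\om_0,\om_1)-q(\ze_0,\om_1),
\]
and both vanish because \(\om_0=\ze_0\). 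Symmetrically, at edge \((K,K+1)\) the rates \(p(\om_K,\om_{K+1})-p(\om_K,\ze_{K+1})\) and \(q(\ze_K,\ze_{K+1})-q(\ze_K,\om_{K+1})\) vanish because \(\om_{K+1}=\ze_{K+1}\). The remaining four summands on each boundary edge are exactly the eight lines of \eqref{eq:lauxdef}, which gives \eqref{eq:lb0a}.

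For the second identity, take an edge \((k,k+1)\) with \(k\le -1\) or \(k\ge K+1\). Every jump across it changes only coordinates outside \(\Kc\). Since \(\vp\) depends only on coordinates in \(\Kc\), every difference \(\vp(\cdot)-\vp(\un\om,\,\un\ze)\) in that summand vanishes, so \(L^\Zb\vp\) reduces to the sum over \(k=0,\dots,K\), which is \(L^{(0\dots K+1)}\vp\). Finally I take \(\Eb\) of the pointwise identity; this is legitimate for any distribution, in particular the extended product measure, which is how the lemma will be used.

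The main obstacle is a technical one: making sense of \(L^\Zb\vp\) as an infinite sum for BLP, where rates are unbounded. Because \(\vp\) is cylinder, all but finitely many summands are identically zero, so the sum is really finite. It remains to check that the finitely many rates involved are integrable under the intended measures, which follows from the Gaussian tails of \(\mu^\vr\) together with \eqref{eq:ef}. I expect this to be the only point needing a sentence of care; the rest is bookkeeping.
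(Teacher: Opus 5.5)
Your proposal is correct and is essentially the paper's proof: the paper likewise notes that edges outside \(\{0,\dots,K\}\) do not affect the cylinder function \(\vp\), so \(L^{(0\dots K+1)}\vp=L^\Zb\vp\), and then invokes \eqref{eq:lb0a}, which you go further and check term by term via the vanishing of the rates \(p(\ze_0,\ze_1)-p(\om_0,\ze_1)\), \(q(\om_0,\om_1)-q(\ze_0,\om_1)\) and their counterparts on the edge \((K,K+1)\). One small correction: that check uses \(\om_0=\ze_0\) and \(\om_{K+1}=\ze_{K+1}\), so the pointwise identity holds only on configurations satisfying these equalities (the paper's standing setup for the auxiliary sites), not on every configuration with \(\om_k\le\ze_k\) as your opening paragraph claims.
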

\begin{proof}
 Terms with jumps outside the range \(\Kc\) of dependence for \(\vp\) do not play a role, hence \(L^{(0\dots K+1)}\vp=L^\Zb\vp\). The proof now follows from \eqref{eq:lb0a}.
\end{proof}
From now on we omit the state dependence of \(\vp(\un\om,\,\un\ze)\) and simply write \(\vp\) in case the argument is just \((\un\om,\,\un\ze)\). Further to \eqref{eq:jump}, we also extend the notation \eqref{eq:1sp} to the second class particle configuration \(\un m\).
\begin{lm}\label{lm:lzcomp}
 \[
  \begin{aligned}
   \Eb^{\un\vr,\un m}(L^\Zb\vp)&=\sum_{i=1}^{K-1}P(m_i,\,m_{i+1},\,\vr_i,\,\vr_{i+1})\cdot\bigl[\Eb^{\vrir,\un m^{i,i+1}}\vp-\Eb^{\un\vr,\un m}\vp\bigr]\\
   &\qquad+P(m_K,\,0,\,\vr_K,\,\vr_K)\cdot\bigl[\Eb^{\un\vr^{K\rightarrow},\un m^{K-}}\vp-\Eb^{\un\vr,\un m}\vp\bigr]\\
   &+\sum_{i=1}^{K-1}Q(m_i,\,m_{i+1},\,\vr_i,\,\vr_{i+1})\cdot\bigl[\Eb^{\vril,\un m^{i+1,i}}\vp-\Eb^{\un\vr,\un m}\vp\bigr]\\
   &\qquad+Q(0,\,m_1,\,\vr_0,\,\vr_1)\cdot\bigl[\Eb^{\un\vr,\un m^{1-}}\vp-\Eb^{\un\vr,\un m}\vp\bigr]
  \end{aligned}
 \]
 with densities \(\un\vr\) and random walk jump rates \(Q\) and \(P\) as in \eqref{eq:asepsokfelt}-\eqref{eq:asepshrb}, for ASEP and \eqref{eq:blpsokfelt}-\eqref{eq:blpshrb} for BLP.
\end{lm}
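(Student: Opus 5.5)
Lemma~\ref{lm:lzcomp} is essentially Theorem 2 of \cite{rwshscp} applied to the extended product measure on \(\Zb\). I will invoke that theorem directly on the infinite-volume measure built in the remark above. That measure has densities \(\vr_0\) on all sites \(i\le0\), \(\vr_1,\dots,\vr_K\) on \(\Kc\), and \(\vr_{K+1}=\vr_K\) on all sites \(i\ge K+1\), with second class particles only inside \(\Kc\).

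\textbf{Step 1: the hypotheses of \cite{rwshscp} hold on all of \(\Zb\).} The shock relations \eqref{eq:asepsokfelt}, \eqref{eq:blpsokfelt} must hold across every edge of \(\Zb\). Inside \(\Kc\) they are assumed. On the edge \((0,1)\) they hold by the definition \eqref{eq:rnk} of \(\vr_0\). On the edge \((K,K+1)\) they hold because \(m_{K+1}=0\) and \(\vr_{K+1}=\vr_K\), which is the \(m=0\) case. Outside, the density is constant and there are no second class particles, which is again the \(m=0\) case. So \(\Eb^{\un\vr,\un m}\) is exactly of the form treated in \cite{rwshscp}. Their identity then expresses \(\Eb^{\un\vr,\un m}(L^\Zb\vp)\) as a sum over all edges \((i,i+1)\) of \(\Zb\) of \(P\)- and \(Q\)-terms of the form in \eqref{eq:mush}. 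This uses the translation between their \(\te\)-parametrisation and our \(\vr\)-parametrisation, and the fact that \(\vrir,\vril\) agree with their \(\un\vr^{i,i+1}\), \(\un\vr^{i+1,i}\).

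\textbf{Step 2: discard edges with no second class particles.} The factor \(m_i\) in \eqref{eq:asepshrj} and \(m_{i+1}\) in \eqref{eq:asepshrb}, and the vanishing of \(\e{\te(\vr+m)}-\e{\te(\vr)}\) at \(m=0\) for BLP, kill every term whose source site carries no second class particle. This leaves the edges \((i,i+1)\) with \(1\le i\le K-1\), which give the two bulk sums. It also leaves two boundary edges:
\begin{itemize}
\item The edge \((K,K+1)\) with a right jump from site \(K\). It has rate \(P(m_K,0,\vr_K,\vr_{K+1})=P(m_K,0,\vr_K,\vr_K)\). It produces the measure with \(\vr_K\) shifted to \(\vr^{K\rightarrow}_K\) and one second class particle moved to site \(K+1\).
\item The edge \((0,1)\) with a left jump from site \(1\). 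It has rate \(Q(0,m_1,\vr_0,\vr_1)\). Its target measure has one particle moved to site \(0\), and \(\vr_0\) is modified.
\end{itemize}

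\textbf{Step 3: rewrite the boundary targets as measures on \(\Kc\).} Since \(\vp\) is cylinder on \(\Kc\), only the marginal of the target measure on \(\Kc\) matters.
\begin{itemize}
\item After the right jump across \((K,K+1)\), the configuration on \(\Kc\) has \(m_K-1\) second class particles at \(K\) and density \(\vr^{K\rightarrow}_K\) there. This is \(\Eb^{\un\vr^{K\rightarrow},\un m^{K-}}\vp\).
\item After the left jump across \((0,1)\), the density on site \(1\) is unchanged, because the left-jump update only modifies \(\vr_0\), which lies outside \(\Kc\). The number at site \(1\) drops to \(m_1-1\). This gives \(\Eb^{\un\vr,\un m^{1-}}\vp\).
\end{itemize}
Summing the surviving terms gives exactly the four lines of the lemma.

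\textbf{Main obstacle.} The delicate point is the bookkeeping at the boundaries. I must check that the marginal on \(\Kc\) of the shifted infinite measure matches the \(\Kc\)-indexed notation. In the ASEP case I must also check that \(\hat\mu^{\vr,m}\) with \(m=m_1-1\) or \(m_K-1\) coincides with the appropriate marginal. Concretely, at site \(K\) after the jump, \(\hat\mu^{\vr^{K\rightarrow}_K,m_K-1}\) must equal what \cite{rwshscp} prescribes, namely Bernoulli of the new density when \(m_K-1=0\). The BLP case is easier, since \(\hat\mu\) does not depend on \(m\). A further technical point is justifying \cite{rwshscp} on \(\Zb\) for BLP, where the state space has growth restrictions. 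This is harmless because \(\vp\) is cylinder, so \(L^\Zb\vp\) is a finite sum with integrable rates under the product measure.
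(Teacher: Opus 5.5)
Your proposal is correct and is essentially the paper's proof. The paper also applies \cite[Theorem 2]{rwshscp} to the product measure extended to \(\Zb\); it justifies this by viewing \(\vp\) as cylinder on \(\{0,\dots,K+1\}\), so that no second class particles sit on the extremal sites of dependence, which your direct use of the measure-level statement covers implicitly. It then kills the vanishing terms via \(P(0,\cdot,\cdot,\cdot)=Q(\cdot,0,\cdot,\cdot)=0\) with \(m_0=m_{K+1}=0\), and identifies the two surviving boundary targets exactly as you do: \(\Eb^{\un\vr^{0\leftarrow},\un m^{1,0}}\vp=\Eb^{\un\vr,\un m^{1-}}\vp\) and \(\Eb^{\un\vr^{K\rightarrow},\un m^{K,K+1}}\vp=\Eb^{\un\vr^{K\rightarrow},\un m^{K-}}\vp\), since \(\vp\) does not depend on sites \(0\) and \(K+1\).
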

\begin{proof}
 We are almost exactly in the situation of \cite[Theorem 2]{rwshscp}. Its proof starts with the assumption that there are no second class particles on the leftmost and rightmost sites of dependence for the cylinder function. This does not necessarily hold in our case. However, \(\vp\) is also a cylinder function on the sites \(0,\,1,\,\dots,\,K+1\), in which case this assumption becomes valid. Hence the full proof applies and results in
 \[
  \begin{aligned}
   \Eb^{\un\vr,\un m}(L^\Zb\vp)&=\sum_{i=0}^KP(m_i,\,m_{i+1},\,\vr_i,\,\vr_{i+1})\cdot\bigl[\Eb^{\vrir,\un m^{i,i+1}}\vp-\Eb^{\un\vr,\un m}\vp\bigr]\\
   &+\sum_{i=0}^KQ(m_i,\,m_{i+1},\,\vr_i,\,\vr_{i+1})\cdot\bigl[\Eb^{\vril,\un m^{i+1,i}}\vp-\Eb^{\un\vr,\un m}\vp\bigr].
  \end{aligned}
 \]
 The final step is a careful look at the boundaries of the summations. Notice that for both ASEP and BLP, \(P(0,\,\cdot,\,\cdot,\,\cdot)=Q(\cdot,\,0,\,\cdot,\,\cdot)=0\), and a.s.\ there are no second class particles on sites 0 and \(K+1\): \(m_0=m_{K+1}=0\). This takes care of the \(i=0\) term in the first line and the \(i=K\) term in the second one. The opposite boundaries do see some action, there we will make use of the product structure of the distribution and the fact that \(\vp\) does not depend on \((\om_0,\,\ze_0)\), nor \((\om_{K+1},\,\ze_{K+1})\). Hence, with notation similar to \eqref{eq:1sp},
 \[
  \Eb^{\un\vr^{0\leftarrow},\un m^{1,0}}\vp=
  \Eb^{\un\vr,\un m^{1-}}\vp
  \qquad\text{and}\qquad
  \Eb^{\un\vr^{K\rightarrow},\un m^{K,K+1}}\vp=
  \Eb^{\un\vr^{K\rightarrow},\un m^{K-}}\vp
 \]
 which, with \(m_0=m_{K+1}=0\) and therefore \(\vr_{K+1}=\vr_K\), finishes the proof.
\end{proof}
\begin{lm}\label{lm:lacomp}
 \begin{multline}
  \Eb^{\un\vr,\un m}(\La\vp)(\un\om,\,\un\ze)\\
  \begin{aligned}
   =\Eb^{\un\vr,\un m}\bigl\{&p(\om_0,\,\ze_1-1)\cdot\frac{\hat\mu^{\vr_1,m_1}(\om_1-1)}{\hat\mu^{\vr_1,m_1}(\om_1)}-p(\om_0,\,\om_1)+q(\ze_0,\,\om_1+1)\cdot\frac{\hat\mu^{\vr_1,m_1}(\om_1+1)}{\hat\mu^{\vr_1,m_1}(\om_1)}-q(\ze_0,\,\ze_1)\\
   +&p(\om_K+1,\,\ze_{K+1})\cdot\frac{\hat\mu^{\vr_K,m_K}(\om_K+1)}{\hat\mu^{\vr_K,m_K}(\om_K)}-p(\ze_K,\,\ze_{K+1})\\
   +&q(\ze_K-1,\,\om_{K+1})\cdot\frac{\hat\mu^{\vr_K,m_K}(\om_K-1)}{\hat\mu^{\vr_K,m_K}(\om_K)}-q(\om_K,\,\om_{K+1})\bigr\}\cdot\vp\\
   +\Eb^{\un\vr,\un m^{1-}}\bigl\{\bigl[&p(\om_0,\,\om_1-1)-p(\om_0,\,\ze_1)\bigr]\cdot\frac{\hat\mu^{\vr_1,m_1}(\om_1-1)}{\hat\mu^{\vr_1,m_1-1}(\om_1)}\\
   +\bigl[&q(\ze_0,\,\ze_1+1)-q(\ze_0,\,\om_1)\bigr]\cdot\frac{\hat\mu^{\vr_1,m_1}(\om_1)}{\hat\mu^{\vr_1,m_1-1}(\om_1)}\bigr\}\cdot\vp\\
   +\Eb^{\un\vr,\un m^{K-}}\bigl\{\bigl[&p(\ze_K+1,\,\ze_{K+1})-p(\om_K,\,\ze_{K+1})\bigr]\cdot\frac{\hat\mu^{\vr_K,m_K}(\om_K)}{\hat\mu^{\vr_K,m_K-1}(\om_K)}\\
   +\bigl[&q(\om_K-1,\,\om_{K+1})-q(\ze_K,\,\om_{K+1})\bigr]\cdot\frac{\hat\mu^{\vr_K,m_K}(\om_K-1)}{\hat\mu^{\vr_K,m_K-1}(\om_K)}\bigr\}\cdot\vp.
  \end{aligned}\label{eq:lauxexp}
 \end{multline}
 Here the last two+two lines are understood to be zero if \(m_1=0\) or \(m_K=0\), respectively, and each line under the expectations is meant to be zero if any of the \(\om_i\) or \(\ze_i\) variables is out of the range \(I\).
\end{lm}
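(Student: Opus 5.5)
The computation is a change of variables in each of the eight terms of \eqref{eq:lauxdef}, carried out under the extended product measure \(\un\nu^{\un\vr,\un m}\). Write \(\Eb^{\un\vr,\un m}(\La\vp)\) as a sum over configurations weighted by \(\prod_i\nu^{\vr_i,m_i}(\om_i,\ze_i)\). For every term of the form \(\text{rate}(\un\om,\un\ze)\cdot\vp(\text{jumped state})\), I substitute so that the jumped state becomes the new summation variable. This turns \(\vp\) back into \(\vp(\un\om,\un\ze)\), with the rate evaluated at the pre-jump state and the probability weight changed accordingly. The \(-\vp(\un\om,\un\ze)\) parts need no substitution and are collected at the end.

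Only sites \(0,1\) (respectively \(K,K+1\)) change in any jump, so the ratio of weights reduces to ratios of the two site marginals. The factor at site \(0\) (or \(K+1\)) disappears after summing, because \(\vp\) does not depend on \((\om_0,\ze_0)\) or \((\om_{K+1},\ze_{K+1})\). Expectations over those sites then act only on the rates, with \(\om_0=\ze_0\sim\mu^{\vr_0}\); I will not integrate them out yet, since the statement keeps them inside \(\Eb\). The remaining factor at site \(1\) (or \(K\)) is a ratio of \(\hat\mu\) values.

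The terms split into two kinds. In the first kind both \(\om\) and \(\ze\) move together, or the move preserves \(m_1\) (resp.\ \(m_K\)). These are the \(p(\om_0,\ze_1)\) and \(q(\ze_0,\om_1)\) lines at the left, and the \(p(\om_K,\ze_{K+1})\) and \(q(\ze_K,\om_{K+1})\) lines at the right. For these \(\ze_1-\om_1=m_1\) is unchanged, so the measure stays \(\un\nu^{\un\vr,\un m}\) and only a ratio \(\hat\mu^{\vr_1,m_1}(\om_1\mp1)/\hat\mu^{\vr_1,m_1}(\om_1)\) appears. For example, the pre-jump state of \(\un\om^{0,1},\un\ze^{0,1}\) has \(\om_1-1,\ze_1-1\), which yields \(p(\om_0,\ze_1-1)\) times the ratio.

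In the second kind only one of \(\om_1,\ze_1\) moves. These are the \([p(\om_0,\om_1)-p(\om_0,\ze_1)]\) and \([q(\ze_0,\ze_1)-q(\ze_0,\om_1)]\) lines, and their right-boundary analogues. Such a jump changes \(m_1\) by \(-1\) (or \(m_K\) by \(-1\)). After substitution, the post-jump configuration is naturally weighted by \(\un\nu^{\un\vr,\un m^{1-}}\), and the correction factor is \(\hat\mu^{\vr_1,m_1}(\cdot)/\hat\mu^{\vr_1,m_1-1}(\om_1)\). For instance, \(\om\to\om^{0,1}\) with \(\ze\) fixed means the pre-jump \(\om_1\) was \(\om_1-1\) while \(\ze_1\) stays. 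The difference of the new pair is \(m_1-1\), so the rate becomes \(p(\om_0,\om_1-1)-p(\om_0,\ze_1)\) with weight ratio \(\hat\mu^{\vr_1,m_1}(\om_1-1)/\hat\mu^{\vr_1,m_1-1}(\om_1)\). The left \(\ze\)-only jump and the two right-boundary ones follow the same pattern. When \(m_1=0\) (or \(m_K=0\)) these rates vanish identically by the basic coupling (e.g.\ \(p(\om_0,\om_1)-p(\om_0,\ze_1)=0\) when \(\om_1=\ze_1\)), so those lines are zero, as the statement says.

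Finally I collect the diagonal \(-\vp\) contributions under \(\Eb^{\un\vr,\un m}\). Their rates telescope: for instance \(-p(\om_0,\ze_1)-[p(\om_0,\om_1)-p(\om_0,\ze_1)]=-p(\om_0,\om_1)\), and similarly for the other three pairs. This produces exactly \(-p(\om_0,\om_1)\), \(-q(\ze_0,\ze_1)\), \(-p(\ze_K,\ze_{K+1})\) and \(-q(\om_K,\om_{K+1})\) in the first block of \eqref{eq:lauxexp}.

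The main obstacle is bookkeeping rather than mathematics. I must track which variable shifts in each substitution and whether the result is expressed in the shifted or unshifted coordinates. I must also handle configurations where the pre-image falls outside \(I\), or has zero \(\hat\mu\)-mass (ASEP with \(m=1\)). These are covered by the convention that such lines vanish, and I will check this convention against the original sum term by term.
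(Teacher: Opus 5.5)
Your proposal is correct and follows the paper's proof essentially step for step. Both change summation variables in each of the eight lines of \eqref{eq:lauxdef} and produce Radon--Nikodym factors in $\hat\mu$. Both pass to $\Eb^{\un\vr,\un m^{1-}}$ and $\Eb^{\un\vr,\un m^{K-}}$ for the lines where only one of $\un\om,\un\ze$ jumps, note that those lines vanish for $m_1=0$ or $m_K=0$, and telescope the diagonal $-\vp$ terms. The only cosmetic difference is that the paper never substitutes at sites $0$ and $K+1$, since $\vp$ does not depend on them. That is slightly cleaner than your substitute-then-resum step: for the single-species jumps, the post-jump pair at site $0$ or $K+1$ would otherwise lie outside the support of the extended measure.
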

\begin{proof}
 When calculating \(\Eb^{\un\vr,\un m}(\La\vp)\), we will change summation variables \(\om_1\), \(\ze_1\), \(\om_K\) and \(\ze_K\) of \(\Eb^{\un\vr,\un m}\) as needed to undo the changes inside \(\vp\) of \eqref{eq:lauxdef}. This will also result in Radon-Nikodym terms with the measures \(\hat\mu^{\vr_i,m_i}\) and, when the number of second class particles also changes, we will need to move the vector \(\un m\) to avoid singularities. As \(\vp\) is cylinder on \(\Kc\), we do not change the variables on sites \(0\) and \(K+1\). Hence, the new summation variables for the first \(\vp\) in the respective lines of \eqref{eq:lauxdef} are as follows:
 \begin{center}
  \begin{tabular}{r|c|c|}
   Line&Old&New\\
   \hline
   1.&\((\un\om^{1+},\,\un\ze^{1+})\)&\(=(\un\om,\,\un\ze)\)\\
   \hline
   2.&\((\un\om^{1+},\,\un\ze)\)&\(=(\un\om,\,\un\ze)\)\\
   \hline
   3.&\((\un\om^{1-},\,\un\ze^{1-})\)&\(=(\un\om,\,\un\ze)\)\\
   \hline
   4.&\((\un\om,\,\un\ze^{1-})\)&\(=(\un\om,\,\un\ze)\)\\
   \hline
   5.&\((\un\om^{K-},\,\un\ze^{K-})\)&\(=(\un\om,\,\un\ze)\)\\
   \hline
   6.&\((\un\om,\,\un\ze^{K-})\)&\(=(\un\om,\,\un\ze)\)\\
   \hline
   7.&\((\un\om^{K+},\,\un\ze^{K+})\)&\(=(\un\om,\,\un\ze)\)\\
   \hline
   8.&\((\un\om^{K+},\,\un\ze)\)&\(=(\un\om,\,\un\ze)\)\\
   \hline
  \end{tabular}
 \end{center}
 The new variables in lines 2, 4, 6 and 8 have modified numbers of second class particles, hence these states are singular to the original expectation \(\Eb^{\un\vr,\un m}\). Instead, we need to use \(\Eb^{\un\vr,\un m^{1-}}\), \(\Eb^{\un\vr,\un m^{1-}}\), \(\Eb^{\un\vr,\un m^{K-}}\), \(\Eb^{\un\vr,\un m^{K-}}\) respectively, with the notation \eqref{eq:1sp} extended for \(\un m\) as well. We do not change the variables for the second terms \(\vp\) in each line.

 Extra care is required in the ASEP case, as the changes above in some cases lead out of the state space of zero or one particles per site. A careful inspection reveals that the below makes sense both for ASEP and BLP if each line is understood with the indicator that all variables in the respective Radon-Nikodym terms and inside the \(\vp\)'s are within range. For example, both lines 9 and 10 below are understood as zero if \(m_1=0\). Further, for ASEP, line 9 is also zero for \(\om_1=0\) as \(\om_1^{1-}\) would then go negative, while line 10 is zero when \(\om_1=1\) as the definition of \(\hat\mu\) included \(\hat\mu^{\cdot,1}(1)=0\) in the ASEP case. These statements can be traced down from before the change of variables. We apply this convention to each Radon-Nikodym term for the rest of the paper.

 With these changes we arrive to
 \[
  \begin{aligned}
   \Eb^{\un\vr,\un m}(\La\vp)(\un\om,\,\un\ze)=\Eb^{\un\vr,\un m}\bigl\{&p(\om_0,\,\ze_1^{1-})\cdot\frac{\hat\mu^{\vr_1,m_1}(\om_1^{1-})}{\hat\mu^{\vr_1,m_1}(\om_1)}-p(\om_0,\,\ze_1)\\
    -\bigl[&p(\om_0,\,\om_1)-p(\om_0,\,\ze_1)\bigr]\\
    +&q(\ze_0,\,\om_1^{1+})\cdot\frac{\hat\mu^{\vr_1,m_1}(\om_1^{1+})}{\hat\mu^{\vr_1,m_1}(\om_1)}-q(\ze_0,\,\om_1)\\
    -\bigl[&q(\ze_0,\,\ze_1)-q(\ze_0,\,\om_1)\bigr]\\
    +&p(\om_K^{K+},\,\ze_{K+1})\cdot\frac{\hat\mu^{\vr_K,m_K}(\om_K^{K+})}{\hat\mu^{\vr_K,m_K}(\om_K)}-p(\om_K,\,\ze_{K+1})\\
    -\bigl[&p(\ze_K,\,\ze_{K+1})-p(\om_K,\,\ze_{K+1})\bigr]\\
    +&q(\ze_K^{K-},\,\om_{K+1})\cdot\frac{\hat\mu^{\vr_K,m_K}(\om_K^{K-})}{\hat\mu^{\vr_K,m_K}(\om_K)}-q(\ze_K,\,\om_{K+1})\\
    -\bigl[&q(\om_K,\,\om_{K+1})-q(\ze_K,\,\om_{K+1})\bigr]\bigr\}\cdot\vp\\
    +\Eb^{\un\vr,\un m^{1-}}\bigl\{\bigl[&p(\om_0,\,\om_1^{1-})-p(\om_0,\,\ze_1)\bigr]\cdot\frac{\hat\mu^{\vr_1,m_1}(\om_1^{1-})}{\hat\mu^{\vr_1,m_1-1}(\om_1)}\\
    +\bigl[&q(\ze_0,\,\ze_1^{1+})-q(\ze_0,\,\om_1)\bigr]\cdot\frac{\hat\mu^{\vr_1,m_1}(\om_1)}{\hat\mu^{\vr_1,m_1-1}(\om_1)}\bigr\}\cdot\vp\\
    +\Eb^{\un\vr,\un m^{K-}}\bigl\{\bigl[&p(\ze_K^{K+},\,\ze_{K+1})-p(\om_K,\,\ze_{K+1})\bigr]\cdot\frac{\hat\mu^{\vr_K,m_K}(\om_K)}{\hat\mu^{\vr_K,m_K-1}(\om_K)}\\
    +\bigl[&q(\om_K^{K-},\,\om_{K+1})-q(\ze_K,\,\om_{K+1})\bigr]\cdot\frac{\hat\mu^{\vr_K,m_K}(\om_K^{K-})}{\hat\mu^{\vr_K,m_K-1}(\om_K)}\bigr\}\cdot\vp.
  \end{aligned}
 \]
 Cancelling a few terms gives \eqref{eq:lauxexp}.
\end{proof}
Next we perform a similar change of variables on the four lines of \eqref{eq:lbdydef}. As no second class particles move, there is no issue with changing the vector \(\un m\). Again, we do not change anything for the second parts \(\vp\).
\begin{lm}\label{lm:lycomp}
 \[
  \begin{aligned}
  \Eb^{\un\vr,\un m}(\Ly\vp)(\un\om,\,\un\ze)=\Eb^{\un\vr,\un m}\bigl\{&p_0(\om_1-1,\,\ze_1-1)\cdot\frac{\hat\mu^{\vr_1,m_1}(\om_1-1)}{\hat\mu^{\vr_1,m_1}(\om_1)}-p_0(\om_1,\,\ze_1)\\
   +&q_0(\om_1+1,\,\ze_1+1)\cdot\frac{\hat\mu^{\vr_1,m_1}(\om_1+1)}{\hat\mu^{\vr_1,m_1}(\om_1)}-q_0(\om_1,\,\ze_1)\\
   +&p_K(\om_K+1,\,\ze_K+1)\cdot\frac{\hat\mu^{\vr_K,m_K}(\om_K+1)}{\hat\mu^{\vr_K,m_K}(\om_K)}-p_K(\om_K,\,\ze_K)\\
   +&q_K(\om_K-1,\,\ze_K-1)\cdot\frac{\hat\mu^{\vr_K,m_K}(\om_K-1)}{\hat\mu^{\vr_K,m_K}(\om_K)}-q_K(\om_K,\,\ze_K)\bigr\}\cdot\vp.
  \end{aligned}
 \]
\end{lm}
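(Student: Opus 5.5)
Lemma \ref{lm:lycomp} follows by the same change of summation variables as in the proof of Lemma \ref{lm:lacomp}, only simpler, because no second class particle moves. Write \(\Eb^{\un\vr,\un m}(\Ly\vp)\) as the sum of the four lines of \eqref{eq:lbdydef}. In each line split the bracket into its two \(\vp\) terms. The second term, \(-p_0(\om_1,\,\ze_1)\vp\) and its analogues, is left untouched. In the first term we substitute new summation variables that undo the shift inside \(\vp\): for line 1, \((\un\om^{1+},\,\un\ze^{1+})=:(\un\om,\,\un\ze)\); for line 2, \((\un\om^{1-},\,\un\ze^{1-})\); for line 3, \((\un\om^{K-},\,\un\ze^{K-})\); for line 4, \((\un\om^{K+},\,\un\ze^{K+})\).

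Since both \(\om_i\) and \(\ze_i\) shift by the same amount, the difference \(\ze_i-\om_i=m_i\) is preserved. The new configuration therefore lies in the support of the same measure \(\un\nu^{\un\vr,\un m}\), and unlike lines 2, 4, 6, 8 in the proof of Lemma \ref{lm:lacomp}, no change of \(\un m\) is needed. By the product structure \eqref{eq:nurm}, the only factor of the density that changes is the one at site \(1\) (resp.\ \(K\)). In line 1, for instance, the substitution gives
\[
\sum p_0(\om_1-1,\,\ze_1-1)\,\hat\mu^{\vr_1,m_1}(\om_1-1)\prod_{j\ne1}\nu^{\vr_j,m_j}\cdot\vp(\un\om,\,\un\ze),
\]
which equals \(\Eb^{\un\vr,\un m}\) of \(p_0(\om_1-1,\,\ze_1-1)\,\hat\mu^{\vr_1,m_1}(\om_1-1)/\hat\mu^{\vr_1,m_1}(\om_1)\cdot\vp\). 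The other three lines go the same way, with shifts \(\om_1+1\), \(\om_K+1\) and \(\om_K-1\) in the rates and in the Radon--Nikodym ratios. Collecting the four lines gives the stated formula.

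The only delicate point, and the main obstacle, is the bookkeeping of ranges, chiefly for ASEP. Terms whose shifted arguments fall outside \(I\), or where \(\hat\mu^{\vr_1,m_1}(\om_1)=0\) so the ratio is singular, must be treated with the convention already adopted after Lemma \ref{lm:lacomp}: each line carries the indicator that every variable is in range. The check is that this indicator matches the support before the substitution. If \(\hat\mu^{\vr_1,m_1}(\om_1\mp1)=0\), the original state had zero weight, so the term is absent on both sides. If \(\om_1\) itself is outside the support, there is nothing to sum. For ASEP the rates \(p_0,q_0,p_K,q_K\) vanish except on \((0,0)\) or \((1,1)\) arguments, which keeps the shifts within \(\{0,1\}\). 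Finally, the substitution is a bijection of the summation index set, so for BLP on \(\Zb\) there are no boundary terms in the reindexing.
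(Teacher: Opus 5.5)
Your proposal is correct and follows the paper's argument: the same four substitutions undo the shifts inside \(\vp\), \(\un m\) stays unchanged because \(\om_i\) and \(\ze_i\) move together, and the product structure leaves a single Radon--Nikodym factor at site \(1\) or \(K\). One small caveat concerns your sentence ``if \(\om_1\) itself is outside the support, there is nothing to sum''. If the new \(\om_1\) lies outside the support of \(\hat\mu^{\vr_1,m_1}\) while the original value \(\om_1\mp1\) lies inside it with a positive rate (for ASEP with \(m_1=0\) this happens only at degenerate densities \(\vr_1\in\{0,1\}\)), the left-hand side does get a contribution. In that case the identity needs either the ratio read as cancelling against the density, or the rate to vanish there, as the theorem's rates do; the paper's two-line proof passes over this edge case too.
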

\begin{proof}
 As before, the new summation variables for the first \(\vp\) in the respective lines of \eqref{eq:lbdydef} are as follows:
 \begin{center}
  \begin{tabular}{r|c|c|}
   Line&Old&New\\
   \hline
   1.&\((\un\om^{1+},\,\un\ze^{1+})\)&\(=(\un\om,\,\un\ze)\)\\
   \hline
   2.&\((\un\om^{1-},\,\un\ze^{1-})\)&\(=(\un\om,\,\un\ze)\)\\
   \hline
   3.&\((\un\om^{K-},\,\un\ze^{K-})\)&\(=(\un\om,\,\un\ze)\)\\
   \hline
   4.&\((\un\om^{K+},\,\un\ze^{K+})\)&\(=(\un\om,\,\un\ze)\)\\
   \hline
  \end{tabular}
 \end{center}
 The statement follows after factoring in the Radon-Nikodym derivatives.
\end{proof}

Let us summarise how far we got so far. We are in the process of calculating, for a cylinder function \(\vp\) depending on \(\omega\)'s of \(\Kc\),
\[
 \begin{aligned}
  \Eb^{\un\vr,\un m}(L\vp)(\un\om,\,\un\ze)&=\Eb^{\un\vr,\un m}(\Lk\vp)(\un\om,\,\un\ze)+\Eb^{\un\vr,\un m}(\Ly\vp)(\un\om,\,\un\ze)\\
  &=\Eb^{\un\vr,\un m}(L^\Zb\vp)(\un\om,\,\un\ze)-\Eb^{\un\vr,\un m}(\La\vp)(\un\om,\,\un\ze)+\Eb^{\un\vr,\un m}(\Ly\vp)(\un\om,\,\un\ze).
 \end{aligned}
\]
Here we have so far used the decomposition \eqref{eq:ldef} of the generator of our process, then Lemma \ref{lm:zaux}.
\begin{proof}[Proof of Theorem \ref{tm:mush}]
Combine with Lemmas \ref{lm:lzcomp}, \ref{lm:lacomp} and \ref{lm:lycomp} to get 
\begin{multline*}
 \Eb^{\un\vr,\un m}(L\vp)(\un\om,\,\un\ze)\\
 \begin{aligned}
&=\sum_{i=1}^{K-1}P(m_i,\,m_{i+1},\,\vr_i,\,\vr_{i+1})\cdot\bigl[\Eb^{\vrir,\un m^{i,i+1}}\vp-\Eb^{\un\vr,\un m}\vp\bigr]\\
   &\qquad+P(m_K,\,0,\,\vr_K,\,\vr_K)\cdot\bigl[\Eb^{\un\vr^{K\rightarrow},\un m^{K-}}\vp-\Eb^{\un\vr,\un m}\vp\bigr]\\
   &+\sum_{i=1}^{K-1}Q(m_i,\,m_{i+1},\,\vr_i,\,\vr_{i+1})\cdot\bigl[\Eb^{\vril,\un m^{i+1,i}}\vp-\Eb^{\un\vr,\un m}\vp\bigr]\\
   &\qquad+Q(0,\,m_1,\,\vr_0,\,\vr_1)\cdot\bigl[\Eb^{\un\vr,\un m^{1-}}\vp-\Eb^{\un\vr,\un m}\vp\bigr]\\
   &-\Eb^{\un\vr,\un m}\bigl\{p(\om_0,\,\ze_1-1)\cdot\frac{\hat\mu^{\vr_1,m_1}(\om_1-1)}{\hat\mu^{\vr_1,m_1}(\om_1)}-p(\om_0,\,\om_1)+q(\ze_0,\,\om_1+1)\cdot\frac{\hat\mu^{\vr_1,m_1}(\om_1+1)}{\hat\mu^{\vr_1,m_1}(\om_1)}-q(\ze_0,\,\ze_1)\\
   &\qquad+p(\om_K+1,\,\ze_{K+1})\cdot\frac{\hat\mu^{\vr_K,m_K}(\om_K+1)}{\hat\mu^{\vr_K,m_K}(\om_K)}-p(\ze_K,\,\ze_{K+1})\\
   &\qquad+q(\ze_K-1,\,\om_{K+1})\cdot\frac{\hat\mu^{\vr_K,m_K}(\om_K-1)}{\hat\mu^{\vr_K,m_K}(\om_K)}-q(\om_K,\,\om_{K+1})\bigr\}\cdot\vp\\
   &-\Eb^{\un\vr,\un m^{1-}}\bigl\{\bigl[p(\om_0,\,\om_1-1)-p(\om_0,\,\ze_1)\bigr]\cdot\frac{\hat\mu^{\vr_1,m_1}(\om_1-1)}{\hat\mu^{\vr_1,m_1-1}(\om_1)}\\
   &\qquad+\bigl[q(\ze_0,\,\ze_1+1)-q(\ze_0,\,\om_1)\bigr]\cdot\frac{\hat\mu^{\vr_1,m_1}(\om_1)}{\hat\mu^{\vr_1,m_1-1}(\om_1)}\bigr\}\cdot\vp\\
   &-\Eb^{\un\vr,\un m^{K-}}\bigl\{\bigl[p(\ze_K+1,\,\ze_{K+1})-p(\om_K,\,\ze_{K+1})\bigr]\cdot\frac{\hat\mu^{\vr_K,m_K}(\om_K)}{\hat\mu^{\vr_K,m_K-1}(\om_K)}\\
   &\qquad+\bigl[q(\om_K-1,\,\om_{K+1})-q(\ze_K,\,\om_{K+1})\bigr]\cdot\frac{\hat\mu^{\vr_K,m_K}(\om_K-1)}{\hat\mu^{\vr_K,m_K-1}(\om_K)}\bigr\}\cdot\vp\\
   &+\Eb^{\un\vr,\un m}\bigl\{p_0(\om_1-1,\,\ze_1-1)\cdot\frac{\hat\mu^{\vr_1,m_1}(\om_1-1)}{\hat\mu^{\vr_1,m_1}(\om_1)}-p_0(\om_1,\,\ze_1)\\
   &\qquad+q_0(\om_1+1,\,\ze_1+1)\cdot\frac{\hat\mu^{\vr_1,m_1}(\om_1+1)}{\hat\mu^{\vr_1,m_1}(\om_1)}-q_0(\om_1,\,\ze_1)\\
   &\qquad+p_K(\om_K+1,\,\ze_K+1)\cdot\frac{\hat\mu^{\vr_K,m_K}(\om_K+1)}{\hat\mu^{\vr_K,m_K}(\om_K)}-p_K(\om_K,\,\ze_K)\\
   &\qquad+q_K(\om_K-1,\,\ze_K-1)\cdot\frac{\hat\mu^{\vr_K,m_K}(\om_K-1)}{\hat\mu^{\vr_K,m_K}(\om_K)}-q_K(\om_K,\,\ze_K)\bigr\}\cdot\vp.
 \end{aligned}
\end{multline*}
Notice that the two sums already correspond to \eqref{eq:mush}. We need all other terms to cancel out. As \(\nu^{\cdot,\un m^{K-}}\), \(\nu^{\cdot,\un m^{1-}}\) and \(\nu^{\cdot,\un m}\) are mutually singular, the cancellation must happen within these respective expectations.

The arbitrary test function \(\vp\) forces identities for each \(\om_i,\,\ze_i\) for \(i\in\Kc\). However, it does not depend on variables at sites \(0\) and \(K+1\) hence, due to independence, here the expectation applies without the \(\vp\). Notice also \(0=m_0=\ze_0-\om_0\). With these in mind, the terms under \(\Eb^{\un\vr,\un m^{1-}}\) require
\begin{equation}
 \begin{aligned}
  &Q(0,\,m_1,\,\vr_0,\,\vr_1)\\
  &-\sum_{\om_0}\Bigl\{\bigl[p(\om_0,\,\om_1-1)-p(\om_0,\,\ze_1)\bigr]\cdot\frac{\hat\mu^{\vr_1,m_1}(\om_1-1)}{\hat\mu^{\vr_1,m_1-1}(\om_1)}\\
  &+\bigl[q(\om_0,\,\ze_1+1)-q(\om_0,\,\om_1)\bigr]\cdot\frac{\hat\mu^{\vr_1,m_1}(\om_1)}{\hat\mu^{\vr_1,m_1-1}(\om_1)}\Bigr\}\hat\mu^{\vr_0,0}(\om_0)=0
 \end{aligned}\label{eq:qleft}
\end{equation}
for any pair \(\un\ze=\un\om+\un m^{1-}\), which now translates to just \(m_1-1=\ze_1-\om_1\). Notice, as we remarked earlier, that only \(\om\) and \(\ze\) values in the state space \(I^\Zb\) are making a contribution. This is due to the factor \(\vp\) at the end of each line in the generator calculation above.

We now check this holds for ASEP. As noted before, the case \(m_1=0\) gives zero in both Radon-Nikodym derivatives. It also results in zero for the \(Q\) term by the definition \eqref{eq:asepshrb}.

With \(m_1=1\), we have \(\ze_1=\om_1\) and
\[
 \vr_1=\frac{p\vr_0}{p\vr_0+q(1-\vr_0)}
\]
from \eqref{eq:asepsokfelt}. When \(\om_1=0\), the Radon-Nikodym term behind the \(p\) terms is zero, and the sum in \eqref{eq:qleft} turns into (cf.\ \eqref{eq:asepsokmuhat})
\[
 \bigl[q(0,1)-q(0,0)\bigr]\cdot\frac1{1-\vr_1}(1-\vr_0)+
 \bigl[q(1,1)-q(1,0)\bigr]\cdot\frac1{1-\vr_1}\vr_0
 =q\cdot\frac{1-\vr_0}{1-\vr_1}=p\vr_0+q(1-\vr_0).
\]
We used the last display in the last step.

When \(\om_1=1\), \(m_1=1\) sends the argument of \(\hat\mu^{\vr_1,1}(1)\) out of range in the Radon-Nikodym term behind the \(q\)'s, and leaves us with the sum equaling
\[
 \bigl[p(0,0)-p(0,1)\bigr]\cdot\frac1{\vr_1}(1-\vr_0)+
 \bigl[p(1,0)-p(1,1)\bigr]\cdot\frac1{\vr_1}\vr_0
 =p\cdot\frac{\vr_0}{\vr_1}=p\vr_0+q(1-\vr_0).
\]
Both match with definition \eqref{eq:asepshrb} of \(Q(0,\,1,\,\vr_0,\,\vr_1)\).

Next we check \eqref{eq:qleft} in the BLP. The Radon-Nikodym term still requires \(m_1\ge1\), but this is the only restriction as now \(I=\Zb\). The sum now reads, via \eqref{eq:blppdef}, \eqref{eq:gauss} and \eqref{eq:blpsokmuhat},
\[
 \begin{aligned}
  &\sum_{\om_0=-\infty}^\infty\bigl[\e{\be(\frac12-\om_1)}-\e{\be(\frac12-\om_1-m_1)}\bigr]\cdot\e{-\frac\be2\cdot[(\om_1-1-\frac{\te(\vr_1)}\be)^2-(\om_1-\frac{\te(\vr_1)}\be)^2]}\hat\mu^{\vr_0,0}(\om_0)\\
  &=\e{\be(\frac12-\om_1)}\bigl[1-\e{-\be m_1}\bigr]\cdot\e{\be\cdot(\om_1-\frac12-\frac{\te(\vr_1)}\be)}=\e{-\te(\vr_1)}-\e{-\te(\vr_1)-\be m_1},
 \end{aligned}
\]
which again matches \(Q(0,\,m_1,\,\vr_0,\,\vr_1)\) of \eqref{eq:blpshrb} after recursive use of \eqref{eq:teshi}.

Next we turn to the \(\Eb^{\cdot,\un m^{K-}}\) terms. The one with the \(P\) factor in front has a modified density \(\vr^{K\rightarrow}_K\) at site \(K\) in the expectation. As the other terms come with an unmodified density, we start with correcting this using a Radon-Nikodym derivative:
\[
 \Eb^{\un\vr^{K\rightarrow},\un m^{K-}}\vp
 =\Eb^{\un\vr,\un m^{K-}}\Bigl(\frac{\hat\mu^{\vr^{K\rightarrow}_K,m_K-1}(\om_K)}{\hat\mu^{\vr_K,m_K-1}(\om_K)}\cdot\vp\Bigr).
\]
With this, and due to \(\vp\) being arbitrary cylinder on \(\Kc\), all terms under \(\Eb^{\cdot,\un m^{K-}}\) expectations must cancel out as:
\begin{equation}
 \begin{aligned}
  &P(m_K,\,0,\,\vr_K,\,\vr_K)\cdot\frac{\hat\mu^{\vr^{K\rightarrow}_K,m_K-1}(\om_K)}{\hat\mu^{\vr_K,m_K-1}(\om_K)}\\
  &-\sum_{\om_{K+1}}\Bigl\{\bigl[p(\ze_K+1,\,\om_{K+1})-p(\om_K,\,\om_{K+1})\bigr]\cdot\frac{\hat\mu^{\vr_K,m_K}(\om_K)}{\hat\mu^{\vr_K,m_K-1}(\om_K)}\\
  &+\bigl[q(\om_K-1,\,\om_{K+1})-q(\ze_K,\,\om_{K+1})\bigr]\cdot\frac{\hat\mu^{\vr_K,m_K}(\om_K-1)}{\hat\mu^{\vr_K,m_K-1}(\om_K)}\Bigr\}\hat\mu^{\vr_K,0}(\om_{K+1})=0.
 \end{aligned}\label{eq:pright}
\end{equation}
Here we already incorporated \(m_{K+1}=0\), hence \(\ze_{K+1}=\om_{K+1}\). This display is verified similarly as in the previous round. First, the case \(m_K=0\) makes all terms trivially zero.

For ASEP this leaves us with \(m_K=1\), hence \(\ze_K=\om_K+m_K-1=\om_K\) and
\[
 \vr^{K\rightarrow}_K=\frac{q\vr_K}{p-(p-q)\vr_K}
\]
by \eqref{eq:asepvrj}. When \(\ze_K=\om_K=0\), \eqref{eq:asepshrj} and \eqref{eq:asepsokmuhat} turn \eqref{eq:pright} into
\[
 \begin{aligned}
  &\quad P(1,\,0,\,\vr_K,\,\vr_K)\cdot\frac{1-\frac{q\vr_K}{p-(p-q)\vr_K}}{1-\vr_K}-p(1,\,0)\cdot\frac1{1-\vr_K}(1-\vr_K)\\
  &=P(1,\,0,\,\vr_K,\,\vr_K)\cdot\frac p{p-(p-q)\vr_K}-p=0,
 \end{aligned}
\]
while for \(\ze_K=\om_K=1\), we have
\[
 P(1,\,0,\,\vr_K,\,\vr_K)\cdot\frac{\frac{q\vr_K}{p-(p-q)\vr_K}}{\vr_K}-q(0,\,1)\cdot\frac1{\vr_K}\vr_K=P(1,\,0,\,\vr_K,\,\vr_K)\cdot\frac q{p-(p-q)\vr_K}-q=0,
\]
both matching definition \eqref{eq:asepshrj}.

For BLP, we are once again restricted to \(m_K\ge1\) and \(\ze_K=\om_K+m_K-1\). Expanding the last sum and using \eqref{eq:blppdef}, \eqref{eq:gauss} and \eqref{eq:blpsokmuhat},
\[
 \sum_{\om_{K+1}=-\infty}^\infty\bigl[\e{\be(\om_K+m_K-\frac12)}-\e{\be(\om_K-\frac12)}\bigr]\hat\mu^{\vr_K,0}(\om_{K+1})=\e{\be(\om_K+m_K-\frac12)}-\e{\be(\om_K-\frac12)},
\]
while the first term of \eqref{eq:pright}, using \eqref{eq:gauss} and \eqref{eq:rt},
\[
 \begin{aligned}
  P(m_K,\,0,\,\vr_K,\,\vr_K)\cdot\frac{\mu^{\vr_K+1}(\om_K)}{\mu^{\vr_K}(\om_K)}&=P(m_K,\,0,\,\vr_K,\,\vr_K)\cdot\frac{\e{\te(\vr_K+1)\om_K}Z\bigl(\te(\vr_K)\bigr)}{\e{\te(\vr_K)\om_K}Z\bigl(\te(\vr_K+1)\bigr)}\\
  &=P(m_K,\,0,\,\vr_K,\,\vr_K)\cdot\frac{\e{(\te(\vr_K)+\be)\om_K}Z\bigl(\te(\vr_K)\bigr)}{\e{\te(\vr_K)\om_K}Z\bigl(\te(\vr_K)+\be\bigr)}\\
  &=P(m_K,\,0,\,\vr_K,\,\vr_K)\cdot\e{\be\om_K}\cdot\e{\be/2-\bigl(\te(\vr_K)+\be\bigr)}\\
  &=\bigl(\e{\te(\vr_K+m_K)}-\e{\te(\vr_K)}\bigr)\cdot\e{\be\om_K-\te(\vr_K)-\be/2}\\
  &=\bigl(\e{\be m_K}-1\bigr)\cdot\e{\be\om_K-\be/2}
 \end{aligned}
\]
is the same i.e., \eqref{eq:pright} is verified.

We arrived to the actual reverse-engineering stage. This arises from the \(\Eb^{\un\vr,\un m}(\cdot\times\vp)\) parts, except those under the two sums which have already been accounted for, summing to zero. As \(\vp\) is cylinder on \(\Kc\), the expectation on sites 0 and \(K+1\) can be calculated, and we are left with functions of variables on sites 1 and \(K\). We group these two sides seprately, keeping in mind that they might exchange a constant between each other. We also substitue \(\ze_0=\om_0\), \(\ze_1=\om_1+m_1\), \(\ze_K=\om_K+m_K\), and \(\ze_{K+1}=\om_{K+1}\). As \(\vp\) depends on sites 1 and \(K\) in an arbitrary manner, identities must hold for each \(\om_1\) and \(\om_K\) in \(I\). Recall also \(\vr_K=\vr_{K+1}\). Hence, we have
\begin{equation}
 \begin{aligned}
   &-Q(0,\,m_1,\,\vr_0,\,\vr_1)\\
   &-\sum_{\om_0}\bigl\{p(\om_0,\,\om_1+m_1-1)\cdot\frac{\hat\mu^{\vr_1,m_1}(\om_1-1)}{\hat\mu^{\vr_1,m_1}(\om_1)}-p(\om_0,\,\om_1)\\
   &\qquad+q(\om_0,\,\om_1+1)\cdot\frac{\hat\mu^{\vr_1,m_1}(\om_1+1)}{\hat\mu^{\vr_1,m_1}(\om_1)}-q(\om_0,\,\om_1+m_1)\bigr\}\cdot\hat\mu^{\vr_0,0}(\om_0)\\
   &+p_0(\om_1-1,\,\om_1+m_1-1)\cdot\frac{\hat\mu^{\vr_1,m_1}(\om_1-1)}{\hat\mu^{\vr_1,m_1}(\om_1)}-p_0(\om_1,\,\om_1+m_1)\\
   &+q_0(\om_1+1,\,\om_1+m_1+1)\cdot\frac{\hat\mu^{\vr_1,m_1}(\om_1+1)}{\hat\mu^{\vr_1,m_1}(\om_1)}-q_0(\om_1,\,\om_1+m_1)\\
   &-P(m_K,\,0,\,\vr_K,\,\vr_K)\\
   &-\sum_{\om_{K+1}}\bigl\{p(\om_K+1,\,\om_{K+1})\cdot\frac{\hat\mu^{\vr_K,m_K}(\om_K+1)}{\hat\mu^{\vr_K,m_K}(\om_K)}-p(\om_K+m_K,\,\om_{K+1})\\
   &\qquad+q(\om_K+m_K-1,\,\om_{K+1})\cdot\frac{\hat\mu^{\vr_K,m_K}(\om_K-1)}{\hat\mu^{\vr_K,m_K}(\om_K)}-q(\om_K,\,\om_{K+1})\bigr\}\cdot\hat\mu^{\vr_K,0}(\om_{K+1})\\
   &+p_K(\om_K+1,\,\om_K+m_K+1)\cdot\frac{\hat\mu^{\vr_K,m_K}(\om_K+1)}{\hat\mu^{\vr_K,m_K}(\om_K)}-p_K(\om_K,\,\om_K+m_K)\\
   &+q_K(\om_K-1,\,\om_K+m_K-1)\cdot\frac{\hat\mu^{\vr_K,m_K}(\om_K-1)}{\hat\mu^{\vr_K,m_K}(\om_K)}-q_K(\om_K,\,\om_K+m_K)=0.
 \end{aligned}\label{eq:reveng}
\end{equation}
We handle the two halves, depending on \(\om_1\) and \(\om_K\) respectively, separately. For ASEP, due to \(\ze_1=\om_1+m_1\), there are three cases to consider. Also recall that \(p_0\) is only non-zero at \((0,\,0)\), and \(q_0\) at \((1,\,1)\).
\begin{itemize}
 \item For \(\om_1=m_1=0\), the first half of the display becomes
 \[
  p\vr_0-q\frac{\vr_1}{1-\vr_1}\cdot(1-\vr_0)-p_0(0,\,0)+q_0(1,\,1)\frac{\vr_1}{1-\vr_1}=(p-q)\vr_0-p_0(0,\,0)+q_0(1,\,1)\frac{\vr_0}{1-\vr_0}.
 \]
 \item When \(\om_1=0\), \(m_1=1\), it is
 \[
  -\bigl[(1-\vr_0)q+\vr_0p\bigr]+p\cdot\vr_0+q\cdot(1-\vr_0)=0.
 \]
 \item Finally, when \(\om_1=1\), \(m_1=0\), we have
 \[
  -p\cdot\frac{1-\vr_1}{\vr_1}\cdot\vr_0+q(1-\vr_0)+p_0(0,\,0)\cdot\frac{1-\vr_1}{\vr_1}-q_0(1,\,1)=(q-p)\cdot(1-\vr_0)+p_0(0,\,0)\cdot\frac{1-\vr_0}{\vr_0}-q_0(1,\,1).
 \]
\end{itemize}

Similarly, \(p_K\) requires \((1,\,1)\) and \(q_K\) requires \((0,\,0)\) in order to avoid being zero. The three ASEP cases for the second half that depends on \(\om_K\) are
\begin{itemize}
 \item \(\om_K=m_K=0\),
 \begin{multline*}
  -p\cdot\frac{\vr_K}{1-\vr_K}\cdot(1-\vr_K)+q\vr_K+p_K(1,\,1)\cdot\frac{\vr_K}{1-\vr_K}-q_K(0,\,0)\\
  =(q-p)\vr_K+p_K(1,\,1)\cdot\frac{\vr_K}{1-\vr_K}-q_K(0,\,0);
 \end{multline*}
 \item \(\om_K=0\) and \(m_K=1\), 
 \[
  -\bigl[(1-\vr_K)p+\vr_Kq\bigr]+p\cdot(1-\vr_K)+q\cdot\vr_K=0;
 \]
 \item \(\om_K=1\) and \(m_K=0\),
 \begin{multline*}
  p\cdot(1-\vr_K)-q\cdot\frac{1-\vr_K}{\vr_K}\cdot\vr_K-p_K(1,\,1)+q_K(0,\,0)\cdot\frac{1-\vr_K}{\vr_K}\\
  =(p-q)\cdot(1-\vr_K)-p_K(1,\,1)+q_K(0,\,0)\cdot\frac{1-\vr_K}{\vr_K}.
 \end{multline*}
 The sum of the two halves (the \(\om_1\) and the \(\om_K\) dependent parts) must be zero in all nine cases. The general solution is
 \[
  \begin{aligned}
   p_0(0,\,0)&=(p+c)\vr_0,&\qquad q_0(1,\,1)&=(q+c)(1-\vr_0),\\
   p_K(1,\,1)&=(p+d)(1-\vr_K),&\qquad q_K(0,\,0)&=(q+d)\vr_K
  \end{aligned}
 \]
 for any constants \(c\) and \(d\) in \([-\min(p,\,q),\,\infty)\). This concludes the proof of the theorem for ASEP.
\end{itemize}

Next we turn to BLP for \eqref{eq:reveng}. Recall \eqref{eq:teshi}, \eqref{eq:fprod}, \eqref{eq:bef}, \eqref{eq:pf}, \eqref{eq:ef}, \eqref{eq:blpshrj}, \eqref{eq:blpshrb}. Separating the \(\om_1\) and the \(\om_K\) dependent parts again, the former reads
\[
 \begin{aligned}
  &\quad\e{-\te(\vr_1+m_1)}-\e{-\te(\vr_1)}\\
  &\quad-\sum_{\om_0}\Bigl\{\bigl(f(\om_0)+f(1-\om_1-m_1)\bigr)\cdot\e{-\te_1}f(\om_1)-f(\om_0)-f(-\om_1)\Bigr\}\cdot\mu^{\te_0}(\om_0)\\
  &\quad+p_0(\om_1-1,\,\om_1+m_1-1)\cdot\e{-\te_1}f(\om_1)-p_0(\om_1,\,\om_1+m_1)\\
  &\quad+q_0(\om_1+1,\,\om_1+m_1+1)\cdot\e{\te_1}f(-\om_1)-q_0(\om_1,\,\om_1+m_1)\\
  &=\e{-\be m_1}\e{-\te_1}-\e{-\te_1}\\
  &\quad-\e{\te_0-\te_1}f(\om_1)-\e{-\te_1}f(1-\om_1-m_1)\cdot f(\om_1)+\e{\te_0}+f(-\om_1)\\
  &\quad+p_0(\om_1-1,\,\om_1+m_1-1)\cdot\e{-\te_1}f(\om_1)-p_0(\om_1,\,\om_1+m_1)\\
  &\quad+q_0(\om_1+1,\,\om_1+m_1+1)\cdot\e{\te_1}f(-\om_1)-q_0(\om_1,\,\om_1+m_1)\\
  &=\e{-\be m_1}\e{-\te_1}-\e{-\te_1}\\
  &\quad-\e{\be m_1}f(\om_1)-\e{-\be m_1}\e{-\te_1}+\e{\be m_1}\e{\te_1}+f(-\om_1)\\
  &\quad+p_0(\om_1-1,\,\om_1+m_1-1)\cdot\e{-\te_1}f(\om_1)-p_0(\om_1,\,\om_1+m_1)\\
  &\quad+q_0(\om_1+1,\,\om_1+m_1+1)\cdot\e{\te_1}f(-\om_1)-q_0(\om_1,\,\om_1+m_1)\\
  &=\e{\be m_1}\e{\te_1}-\e{-\te_1}-\e{\be m_1}f(\om_1)+f(-\om_1)\\
  &\quad+p_0(\om_1-1,\,\om_1+m_1-1)\cdot\e{-\te_1}f(\om_1)-p_0(\om_1,\,\om_1+m_1)\\
  &\quad+q_0(\om_1+1,\,\om_1+m_1+1)\cdot\e{\te_1}f(-\om_1)-q_0(\om_1,\,\om_1+m_1).
 \end{aligned}
\]
The second half of \eqref{eq:reveng} is
\[
 \begin{aligned}
  &\quad\e{\te(\vr_K)}-\e{\te(\vr_K+m_K)}\\
  &\quad-\sum_{\om_{K+1}}\Bigl\{\bigl(f(\om_K+1)+f(-\om_{K+1})\bigr)\cdot\e{\te_K}f(-\om_K)-f(\om_K+m_K)-f(-\om_{K+1})\Bigr\}\cdot\mu^{\te_K}(\om_{K+1})\\
  &\quad+p_K(\om_K+1,\,\om_K+m_K+1)\cdot\e{\te_K}f(-\om_K)-p_K(\om_K,\,\om_K+m_K)\\
  &\quad+q_K(\om_K-1,\,\om_K+m_K-1)\cdot\e{-\te_K}f(\om_K)-q_K(\om_K,\,\om_K+m_K)\\
  &=\e{\te_K}-\e{\be m_K}\e{\te_K}\\
  &\quad-\e{\te_K}f(\om_K+1)f(-\om_K)-\e{\te_K}\e{-\te_{K+1}}f(-\om_K)+f(\om_K+m_K)+\e{-\te_K}\\
  &\quad+p_K(\om_K+1,\,\om_K+m_K+1)\cdot\e{\te_K}f(-\om_K)-p_K(\om_K,\,\om_K+m_K)\\
  &\quad+q_K(\om_K-1,\,\om_K+m_K-1)\cdot\e{-\te_K}f(\om_K)-q_K(\om_K,\,\om_K+m_K)\\
  &=\e{\te_K}-\e{\be m_K}\e{\te_K}\\
  &\quad-\e{\te_K}-f(-\om_K)+\e{\be m_K}f(\om_K)+\e{-\te_K}\\
  &\quad+p_K(\om_K+1,\,\om_K+m_K+1)\cdot\e{\te_K}f(-\om_K)-p_K(\om_K,\,\om_K+m_K)\\
  &\quad+q_K(\om_K-1,\,\om_K+m_K-1)\cdot\e{-\te_K}f(\om_K)-q_K(\om_K,\,\om_K+m_K)\\
  &=\e{-\te_K}-\e{\be m_K}\e{\te_K}-f(-\om_K)+\e{\be m_K}f(\om_K)\\
  &\quad+p_K(\om_K+1,\,\om_K+m_K+1)\cdot\e{\te_K}f(-\om_K)-p_K(\om_K,\,\om_K+m_K)\\
  &\quad+q_K(\om_K-1,\,\om_K+m_K-1)\cdot\e{-\te_K}f(\om_K)-q_K(\om_K,\,\om_K+m_K).
 \end{aligned}
\]
Define the function
\[
 a(x,\,y):\,=\Bigl(p_0(x,\,y)-q_0(x+1,\,y+1)\cdot\e{\te_1}f(-x)\Bigr)\mu^{\te_1}(x)
\]
and notice, via \eqref{eq:fprod} and \eqref{eq:ef},
\[
 \begin{aligned}
  &\quad p_0(x-1,\,y-1)\cdot\e{-\te_1}f(x)-p_0(x,\,y)+q_0(x+1,\,y+1)\cdot\e{\te_1}f(-x)-q_0(x,\,y)\\
  &=\Bigl(p_0(x-1,\,y-1)-q_0(x,\,y)\cdot\e{\te_1}f(1-x)\Bigr)\cdot\e{-\te_1}f(x)-\Bigl(p_0(x,\,y)-q_0(x+1,\,y+1)\cdot\e{\te_1}f(-x)\Bigr)\\
  &=a(x-1,\,y-1)\cdot\frac{\e{-\te_1}f(x)}{\mu^{\te_1}(x-1)}-a(x,\,y)\cdot\frac1{\mu^{\te_1}(x)}\\
  &=\bigl(a(x-1,\,y-1)-a(x,\,y)\bigr)\cdot\frac1{\mu^{\te_1}(x)}.
 \end{aligned}
\]
Similarly,
\[
 b(x,\,y):\,=\Bigl(p_K(x,\,y)-q_K(x-1,\,y-1)\cdot\e{-\te_K}f(x)\Bigr)\mu^{\te_K}(x),
\]
then
\[
 \begin{aligned}
  &\quad p_K(x+1,\,y+1)\cdot\e{\te_K}f(-x)-p_K(x,\,y)+q_K(x-1,\,y-1)\cdot\e{-\te_K}f(x)-q_K(x,\,y)\\
  &=\!\Bigl(p_K(x+1,\,y+1)-q_K(x,\,y)\cdot\e{-\te_K}f(1+x)\Bigr)\cdot\e{\te_K}f(-x)-\Bigl(p_K(x,\,y)-q_K(x-1,\,y-1)\cdot\e{-\te_K}f(x)\Bigr)\\
  &=b(x+1,\,y+1)\cdot\frac{\e{\te_K}f(-x)}{\mu^{\te_K}(x+1)}-b(x,\,y)\cdot\frac1{\mu^{\te_K}(x)}\\
  &=\bigl(b(x+1,\,y+1)-b(x,\,y)\bigr)\cdot\frac1{\mu^{\te_K}(x)}.
 \end{aligned}
\]

With these, \eqref{eq:reveng} turns into
\[
 \begin{aligned}
  0&=\e{\be m_1}\e{\te_1}-\e{-\te_1}-\e{\be m_1}f(\om_1)+f(-\om_1)\\
  &\quad+\bigl(a(\om_1-1,\,\om_1+m_1-1)-a(\om_1,\,\om_1+m_1)\bigr)\cdot\frac1{\mu^{\te_1}(\om_1)}\\
  &+\e{-\te_K}-\e{\be m_K}\e{\te_K}-f(-\om_K)+\e{\be m_K}f(\om_K)\\
  &\quad+\bigl(b(\om_K+1,\,\om_K+m_K+1)-b(\om_K,\,\om_K+m_K)\bigr)\cdot\frac1{\mu^{\te_K}(\om_K)}.
 \end{aligned}
\]
Since the first half of this only depends on \(\om_1\) and \(m_1\) and the second half on \(\om_K\) and \(m_K\), the general solution is 
\begin{equation}
 \begin{aligned}
  a(x,\,x+m)-a(x-1,\,x+m-1)&=\Bigl(\e{\be m}\e{\te_1}-\e{-\te_1}-\e{\be m}f(x)+f(-x)+C\Bigr)\cdot\mu^{\te_1}(x),\\
  b(x,\,x+m)-b(x+1,\,x+m+1)&=\Bigl(\e{-\te_K}-\e{\be m}\e{\te_K}-f(-x)+\e{\be m}f(x)-C\bigr)\cdot\mu^{\te_K}(x)
 \end{aligned}\label{eq:abrec}
\end{equation}
for arbitrary \(C\in\Rb\) and all \(x\in\Zb\), \(m\in\Zb_{\ge0}\). We seek boundary rates of finite expectation, hence it is a natural assumption that
\[
 \sum_xp_0(x,\,x+m)\mu^{\te_1}(x),\qquad\sum_xp_K(x,\,x+m)\mu^{\te_K}(x)
\]
as well as
\[
 \begin{aligned}
  \sum_xq_0(x+1,\,x+m+1)\e{\te_1}f(-x)\mu^{\te_1}(x)&=\sum_xq_0(x,\,x+m)\mu^{\te_1}(x)\\
  \sum_xq_K(x-1,\,x+m-1)\e{-\te_K}f(x)\mu^{\te_K}(x)&=\sum_xq_K(x,\,x+m)\mu^{\te_K}(x)
 \end{aligned}
\]
are each finite. Under that assumption summing \eqref{eq:abrec} and using \eqref{eq:ef} yields \(C=0\). Still using summability and \eqref{eq:ef},
\[
 \begin{aligned}
  a(x,\,x+m)&=\sum_{z=-\infty}^x\bigl(a(z,\,z+m)-a(z-1,\,z+m-1)\Bigr)\\
  &=\sum_{z=-\infty}^x\Bigl(\e{\be m}\e{\te_1}\bigl(\mu^{\te_1}(z)-\mu^{\te_1}(z-1)\bigr)+\e{-\te_1}\bigl(\mu^{\te_1}(z+1)-\mu^{\te_1}(z)\bigr)\Bigr)\\
  &=\e{\be m}\e{\te_1}\mu^{\te_1}(x)+\e{-\te_1}\mu^{\te_1}(x+1)\\
  &=\Bigl(\e{\be m}\e{\te_1}+f(-x)\Bigr)\mu^{\te_1}(x);\\
  b(x,\,x+m)&=\sum_{z=x}^\infty\bigl(b(z,\,z+m)-b(z+1,\,z+m+1)\Bigr)\\
  &=\sum_{z=x}^\infty\Bigl(\e{-\te_K}\bigl(\mu^{\te_K}(z)-\mu^{\te_K}(z+1)\bigr)+\e{\be m}\e{\te_K}\bigl(\mu^{\te_K}(z-1)-\mu^{\te_K}(z)\bigr)\Bigr)\\
  &=\e{-\te_K}\mu^{\te_K}(x)+\e{\be m}\e{\te_K}\mu^{\te_K}(x-1)\\
  &=\Bigl(\e{-\te_K}+\e{\be m}f(x)\Bigr)\mu^{\te_K}(x).
 \end{aligned}
\]
We arrive to
\begin{equation}
 \begin{aligned}
  p_0(x,\,x+m)-q_0(x+1,\,x+m+1)\cdot\e{\te_1}f(-x)&=\e{\be m}\e{\te_1}+f(-x),\\
  p_K(x,\,x+m)-q_K(x-1,\,x+m-1)\cdot\e{-\te_K}f(x)&=\e{-\te_K}+\e{\be m}f(x).
 \end{aligned}\label{eq:truth}
\end{equation}
The natural, totally asymmetric choice would be
\[
 \begin{aligned}
  p_0(x,\,x+m)&=\e{\be m}\e{\te_1}+f(-x),&\qquad q_0(x,\,x+m)&\equiv0,\\
  p_K(x,\,x+m)&=\e{-\te_K}+\e{\be m}f(x),&\qquad q_K(x,\,x+m)&\equiv0.
 \end{aligned}
\]
By \(\te_0=\te_1+\be m\) and \(\te_K=\te_{K+1}\), we can interpret these right jumps as
\[
 \begin{aligned}
  p_0(\om_1,\,\om_1+m_1)&=\e{\te_0}+f(-\om_1),\\
  p_K(\om_K,\,\om_K+m_K)&=f(\om_K+m_K)+\e{-\te_{K+1}}
 \end{aligned}
\]
as stated in the theorem.

However, \eqref{eq:truth} allows the addition of reversible boundary dynamics, with arbitrary non-negative \(q^+_0(\om_1,\,\om_1+m_1)\) and \(q^+_K(\om_K,\,\om_K+m_K)\), and
\[
 \begin{aligned}
  p^+_0(x,\,x+m)&=q^+_0(x+1,\,x+m+1)\cdot\e{\te_1}f(-x),\\
  p^+_K(x,\,x+m)&=q^+_K(x-1,\,x+m-1)\cdot\e{-\te_K}f(x).
 \end{aligned}
\]
This completes the proof for BLP.
\end{proof}

\section{Stationary distributions}\label{sc:sd}

We first prove the stationarity results for ASEP.
\begin{proof}[Proof of Proposition \ref{pr:wdbl}]
 We check detailed balance. The state space of the \(D\)'s is that each of them is at least 1 and, given that a move does not lead out of this state space, the following can happen:
 \begin{itemize}
  \item With rate \(P^M\), \(D_0^M=z\) increases by one, and with rate \(Q^M\), it decreases by one. The detailed balance equation
  \[
   P^M\cdot\Bigl(\frac{P^M}{Q^M}\Bigr)^{z-1}=
   Q^M\cdot\Bigl(\frac{P^M}{Q^M}\Bigr)^z
  \]
  holds.
  \item For \(1\le\ell<M\), with rate \(P^\ell\), \(D_0^\ell=z\) increases by one and \(D_0^{\ell+1}=y>2\) decreases by one, and with rate \(Q^\ell\), it is the other way around. The detailed balance equation this time is
  \[
   P^\ell\cdot\Bigl(\prod_{n=\ell}^M\frac{P^n}{Q^n}\Bigr)^{z-1}\Bigl(\prod_{n=\ell+1}^M\frac{P^n}{Q^n}\Bigr)^{y-1}
   =Q^\ell\cdot\Bigl(\prod_{n=\ell}^M\frac{P^n}{Q^n}\Bigr)^z\Bigl(\prod_{n=\ell+1}^M\frac{P^n}{Q^n}\Bigr)^{y-2},
  \]
 which also holds.
 \end{itemize}
\end{proof}
\begin{proof}[Proof of Proposition \ref{pr:rh} for ASEP]
 By the definitions \eqref{eq:ellthr} and \eqref{eq:ellthl},
 \begin{equation}
 \begin{aligned}
  \prod_{n=\ell}^M\frac{P^n}{Q^n}
  =\frac{\prod_{n=\ell}^MP^n}{\prod_{n=\ell}^MQ^n}&=\Bigl(\frac pq\Bigr)^{M-\ell+1}\cdot\frac{1-\vr_0+\bigl(\frac pq\bigr)^{\ell-1}\cdot\vr_0}{1-\vr_0+\bigl(\frac pq\bigr)^M\cdot\vr_0}\cdot\frac{1-\vr_0+\bigl(\frac pq\bigr)^{\ell-1}\cdot\vr_0}{1-\vr_0+\bigl(\frac pq\bigr)^M\cdot\vr_0}\\
  &=\biggl(\frac{\bigl(\frac qp\bigr)^{\frac{\ell-1}2}\cdot(1-\vr_0)+\bigl(\frac pq\bigr)^{\frac{\ell-1}2}\cdot\vr_0}{\bigl(\frac qp\bigr)^{\frac M2}\cdot(1-\vr_0)+\bigl(\frac pq\bigr)^{\frac M2}\cdot\vr_0}\biggr)^2.
 \end{aligned}\label{eq:pqprod}
\end{equation}
As a function of \(\ell\), the numerator is a positive linear combination of two exponentials, hence convex. This shows that the extrema of \eqref{eq:pqprod} are reached at the boundaries \(\ell=1\) or \(\ell=M\). These are
 \[
  \biggl(\frac{\bigl(\frac pq\bigr)^{\frac M2}}{1-\vr_0+\bigl(\frac pq\bigr)^M\cdot\vr_0}\biggr)^2
  \qquad\text{and}\qquad
  \biggl(\frac{\bigl(\frac pq\bigr)^\frac12(1-\vr_0)+\bigl(\frac pq\bigr)^{M-\frac12}\cdot\vr_0}{1-\vr_0+\bigl(\frac pq\bigr)^M\cdot\vr_0}\biggr)^2.
 \]
 We have a stationary probability distribution on the model if and only if both of these quantities is less than 1 i.e., when \(p>q\),
 \[
  \begin{aligned}
   1-\frac{\bigl(\frac pq\bigr)^{\frac M2}}{1-\vr_0+\bigl(\frac pq\bigr)^M\cdot\vr_0}&>0\\
   \Bigl(\frac pq\Bigr)^M\vr_0-\Bigl(\frac pq\Bigr)^{\frac M2}+1-\vr_0&>0\\
   \vr_0&>\frac{\bigl(\frac pq\bigl)^\frac M2-1}{\bigl(\frac pq\bigl)^M-1}=\frac1{\bigl(\frac pq\bigl)^\frac M2+1},
  \end{aligned}
 \]
 and also
 \[
  \begin{aligned}
   1-\frac{\bigl(\frac pq\bigr)^\frac12(1-\vr_0)+\bigl(\frac pq\bigr)^{M-\frac12}\cdot\vr_0}{1-\vr_0+\bigl(\frac pq\bigr)^M\cdot\vr_0}&>0\\
   {1-\vr_0+\Bigl(\frac pq\Bigr)^M\cdot\vr_0}-\Bigl(\frac pq\Bigr)^\frac12(1-\vr_0)-\Bigl(\frac pq\Bigr)^{M-\frac12}\cdot\vr_0&>0\\
   \vr_0&>\frac{\bigl(\frac pq\bigr)^\frac12-1}{\bigl(\frac pq\bigr)^M\Bigl(1-\bigl(\frac pq\bigr)^{-\frac12}\Bigr)+(\frac pq\bigr)^\frac12-1}\\
   &=\frac1{\bigl(\frac pq\bigr)^{M-\frac12}+1}.
  \end{aligned}
 \]
 This latter is weaker due to \(M-\frac12\ge\frac M2\).

 When \(p<q\), the inequalities turn around after dividing by negative numbers in both cases above. As \(\frac pq<1\) this time, 
 \[
  \vr_0<\frac1{\bigl(\frac pq\bigl)^\frac M2+1}
 \]
 wins again.

 The Rankine-Hugoniot velocity \eqref{eq:aseprh} is
 \[
  \begin{aligned}
   (p-q)\cdot(1-\vr_0-\vr_{K+1})&=(p-q)\cdot\Bigl(1-\vr_0-\frac{\bigl(\frac pq\bigr)^M\vr_0}{1-\vr_0+\bigl(\frac pq\bigr)^M\vr_0}\Bigr)\\
   &=(p-q)\cdot\frac{(1-\vr_0)^2-\bigl(\frac pq\bigr)^M\vr_0^2}{1-\vr_0+\bigl(\frac pq\bigr)^M\vr_0}\\
  \end{aligned}
 \]
 When \(p>q\), this is negative exactly when \(\vr_0>\frac1{\bigl(\frac pq\bigl)^\frac M2+1}\), while for \(p<q\) when \(\vr_0<\frac1{\bigl(\frac pq\bigl)^\frac M2+1}\) as required.
\end{proof}

Next, the BLP stationarity proofs.
\begin{proof}[Proof of Proposition \ref{pr:blprev}]
 We check detailed balance on the two possible types of jumps and their reverses. The first one is \(X_0^M\) jumping right from site \(i<K\) to \(i+1\) and back. This means
 \[
  (d^M_0,\,m_i\ge1,\,m_{i+1}=0)\leftrightarrow(d^M_0+1,\,m_i-1\ge0,\,m_{i+1}+1=1).
 \]
 The detailed balance condition amounts to checking
 \begin{multline*}
  \e{\te_0-\be M}\Bigl(\e{\be m_i}-1\Bigr)\cdot e^{\bigl(\be(M^2-2M+1-M^2)+2\te_0\cdot(M-M+1)\bigr)d^M_0}\cdot\prod_{n=1}^{m_i}\frac1{\e{\be n}-1}\\
  =\e{-\te_0+\be(M-1)}\Bigl(\e{\be}-1\Bigr)\cdot e^{\bigl(\be(M^2-2M+1-M^2)+2\te_0\cdot(M-M+1)\bigr)(d^M_0+1)}\cdot\prod_{n=1}^{m_i-1}\frac1{\e{\be n}-1}\cdot\frac1{\e\be-1}.
 \end{multline*}
 The \((\e\cdot-1)\) factors work out, and we are left to check
 \[
  \te_0-\be M=-\te_0+\be(M-1)-\be(2M-1)+2\te_0
 \]
 which holds.

 The second type of jumps is \(X_0^\ell\), one of \(1\le\ell<M\), jumping right from site \(i<K\) to \(i+1\) and back. This means
 \[
  (d^\ell_0,\,d^{\ell+1}_0\ge1,\,m_i\ge1,\,m_{i+1})\leftrightarrow(d^\ell_0+1,\,d^{\ell+1}_0-1,\,m_i-1,\,m_{i+1}+1).
 \]
 We need to check
 \[
  \begin{aligned}
   \e{\te_0-\be\ell}\Bigl(\e{\be m_i}-1\Bigr)
   \cdot&e^{\bigl(\be(\ell^2-2\ell+1-M^2)+2\te_0\cdot(M-\ell+1)\bigr)d^\ell_0}\\
   \cdot&e^{\bigl(\be((\ell+1)^2-2(\ell+1)+1-M^2)+2\te_0\cdot(M-(\ell+1)+1)\bigr)d^{\ell+1}_0}\\
   \cdot&\prod_{n=1}^{m_i}\frac1{\e{\be n}-1}
   \cdot\prod_{r=1}^{m_{i+1}}\frac1{\e{\be r}-1}\\
   =\e{-\te_0+\be(\ell-1)}\Bigl(\e{\be(m_{i+1}+1)}-1\Bigr)
   \cdot&e^{\bigl(\be(\ell^2-2\ell+1-M^2)+2\te_0\cdot(M-\ell+1)\bigr)(d^\ell_0+1)}\\
   \cdot&e^{\bigl(\be((\ell+1)^2-2(\ell+1)+1-M^2)+2\te_0\cdot(M-(\ell+1)+1)\bigr)(d^{\ell+1}_0-1)}\\
   \cdot&\prod_{n=1}^{m_i-1}\frac1{\e{\be n}-1}
   \cdot\prod_{r=1}^{m_{i+1}+1}\frac1{\e{\be r}-1}.
  \end{aligned}
 \]
 This is equivalent to
 \begin{multline*}
  \te_0-\be\ell\\
  =-\te_0+\be(\ell-1)+\be(\ell^2-2\ell+1-M^2)+2\te_0\cdot(M-\ell+1)-\be\bigl((\ell+1)^2-2(\ell+1)+1-M^2\bigr)-2\te_0\cdot\bigl(M-(\ell+1)+1\bigr)
 \end{multline*}
 which is readily checked.
\end{proof}
\begin{proof}[Proof of Proposition \ref{pr:rh} for BLP]
 Since \(M<\infty\) is fixed, the second factor with the \(m_i\)'s of the measures \eqref{eq:blpstati} is always finite. The measures are normalisable if and only if for each \(1\le\ell\le M\),
 \[
  \be(\ell^2-2\ell+1-M^2)+2\te_0\cdot(M-\ell+1)<0.
 \]
 As \(\ell\le M\), this is a decreasing statement in \(\te_0\), and the first \(\te_0\) where it fails is
 \[
  \frac{\be\bigl(M^2-(\ell-1)^2\bigr)}{2\bigl(M-(\ell-1)\bigr)}=\frac\be2(M+\ell-1).
 \]
 The first \(\ell\) where \(\te_0\) becomes too large is \(\ell=1\), and the normalisation works up to \(\te_0<\frac\be2M\). This is exactly the region where the Rankine-Hugoniot velocity \eqref{eq:blprh} is negative.
\end{proof}

\section*{Acknowledgements}

The author thanks Lewis Duffy, Dimitri Pantelli, and Gunter Sch\"utz for early discussions about the topics of this manuscript.

\hop
This study did not involve any underlying data.

\end{document}